\def\ps@pprintTitle{%
	\let\@oddhead\@empty
	\let\@evenhead\@empty
 
	\def\@oddfoot{\footnotesize\itshape
		{} \hfill\today}%
	\let\@evenfoot\@oddfoot
}
\newtheorem{theor}{Theorem}
\newtheorem{prop}[theor]{Proposition}
\newtheorem{cor}[theor]{Corollary}
\newtheorem{lemma}[theor]{Lemma}
\theoremstyle{definition} 
\newtheorem{defin}{Definition}
\newtheorem{rem}{Remark}
\newtheorem{ex}[theor]{Example}
\newtheorem{exs}[theor]{Examples}
\DeclareMathOperator{\Ret}{Ret}
\begin{document}

\begin{frontmatter}
	\title{On commutative set-theoretic solutions of the Pentagon Equation 
 %and their displacements groups\tnoteref{mytitlenote}
 }
	\tnotetext[mytitlenote]{The author is a member of GNSAGA (INdAM).}
	\author{Marco CASTELLI}
	\ead{marco.castelli@unisalento.it - marcolmc88@gmail.com}
	%\author[unile]{Francesco~CATINO%\corref{c1}
	%}
	%\ead{francesco.catino@unisalento.it}
	%\author [unile] {Paola~STEFANELLI}
	%\ead{paola.stefanelli@unisalento.it}
%		\cortext[c1]{Corresponding author}
	%\address[unile]{Dipartimento di Matematica e Fisica ``Ennio De Giorgi"
	%	\\
%		Universit\`{a} del Salento\\
%		Via Provinciale Lecce-Arnesano \\
	%	73100 Lecce (Italy)\\}
	%	

\begin{abstract}
We extend the so-called \emph{retract relation} given in \cite{colazzo2020set} for involutive set-theoretic solutions of the Pentagon Equation and we introduce the notion of \emph{associated permutation group} to study the family of the commutative non-degenerate ones. Moreover, we develop a machinery to construct all these solutions and we use it to give a quite explicit classification of the irretractable ones. Finally, non-degenerate solutions on left-zero semigroup are studied in detail, with an emphasis on the ones with cyclic associated permutation group and on the ones having small size.
\end{abstract}
\begin{keyword}
\texttt{Set theoretic solution, Pentagon equation }
\MSC[2020] 81R50 20M99
\end{keyword}

\end{frontmatter}

\section*{Introduction}

A pair $(\mathcal{S},V)$ is said to be a \emph{solution of the Pentagon Equation} if $V$ is a vector space, $\mathcal{S}$ is a linear map and the equality
$$\mathcal{S}_{23}\mathcal{S}_{13}\mathcal{S}_{12}=\mathcal{S}_{12}\mathcal{S}_{23} $$
holds, where $ \mathcal{S}_{ij}$ is the map from $V\otimes V\otimes V$ to itself that acts as $\mathcal{S}$ on the $(i,j)$-factor and as the identity on the remaining factor. Solutions of the Pentagon Equation appeared in several context, such as Hilbert spaces, quantum groups and Hopf algebras; here, we give only some references \cite{baaj1993unitaires,kashaev2011fully,kawamura2010pentagon,militaru2004heisenberg,moore1989classical,woronowicz1996multiplicative}.\\
In recent years, several people studied this equation using teqniques and ideas that are similar to the ones used for the \emph{Quantum Yang Baxter equation}, another famous equation that comes from the field of Mathematical Physics. In the context of the Quantum Yang Baxter equation, several papers, following the approach suggested by Drinfield (see \cite{drinfeld1992some}), focus on the classification-problem of the simplified case of \emph{set-theoretic} solutions. Bijective non-degenerate set-theoretic solutions received a lot of attention and several authors developed various teqniques to find and classify them (see for example \cite{etingof1998set,gateva2004combinatorial,guarnieri2017skew,rump2005decomposition}). In this context, two useful tools reveal their importance: the \emph{retract relation}, introduced in \cite{etingof1998set} for involutive solutions and considered in \cite{JeP18} for bijective solutions, and the \emph{associated permutation group}. The retract relation, which is an equivalence relation defined on the underlying set $X$ of a solution, allows to construct new solutions of smaller cardinality and provides a way to study an arbitrary solution focusing on the so-called \emph{retraction-process}. The associated permutation group, which is a permutation group generated by two standard maps provided by a non-degenerate solution, allow to give structural informations on solutions and provides a natural approach to find and classify bijective non-degenerate solutions: indeed, given an arbitrary group $G$, one can try to construct and classify all the bijective non-degenerate solutions with associated permutation group isomorphic to $G$. Even if we are far from a complete classification of set-theoretic solutions of the Yang-Baxter equation, several families of solutions are completely understood (see for example \cite{castelli2023studying,JePiZa20x,jedlivcka2021cocyclic,rump2022class}).\\
As we said before, some authors introduced in the context of the Pentagon Equation some ideas and tools used to study Yang-Baxter Equation. In that regard, in 1998 Kashaev and Sergeev \cite{kashaev1998pentagon} began the study of the set-theoretical version of this equation. Specifically, a pair $(S,s)$ is said to be a \textit{set-theoretic solution of the Pentagon Equation} if the equality
 $$ s_{23}s_{13}s_{12} = s_{12}s_{23}$$
 holds, where $s_{12} = s\times id$, $s_{23} = id\times s$ and $s_{13} = (\tau \times id)(id \times s)(\tau\times id)$ are mappings from $S^3$ to itself and $\tau$ is the flip map. Similarly to what happens in the context of Quantum Yang-Baxter Equation, every set-theoretic solution $(S,s)$ induces a solution of the Pentagon Equation $(V,\mathcal{S})$, where $V$ is a vector space having $S$ as a basis. Following the terminology given in \cite{catino2020set1} and writing the the map $s$ as $s(x,y):=(x\cdotp y,\theta_x(y))$ for all $x,y\in S$, several authors investigated these solutions by several algebraic structures associated to the operation $\cdotp$ or to the maps $\theta_x$. In \cite{kashaev1998pentagon}, Kashaev and Sergeev completely describe the invertible solutions under the assumption that $(S,\cdotp)$ is a group. This description has been extended by Catino, Mazzotta and Miccoli in \cite{catino2020set1}, where the invertibility of solutions is dropped. Moreover, they completely classified all the solutions $(S,s)$ for which the operation $x*y:=\theta_x(y)$ makes $S$ into a group structure. Other remarkable results has been obtained in \cite{mazzotta2023set}, where the authors studied solutions on Clifford semigroup.\\
By a different point of view, other papers focus on the map $s$. In \cite{catino2020set}, maps $s$ which makes $(S,s)$ into a set-theoretic solutions of the Pentagon Equation that is also a set-theoretic solution of the Yang-Baxter equation have been characterised. In \cite{mazzotta2023idempotent} Mazzotta studied \emph{idempotent} solutions, i.e. solutions $(S,s)$ for which $s^2=s$, while in \cite{colazzo2020set} Colazzo, Jespers and Kubat provide a complete description of \emph{involutive} solutions, i.e. solutions $(S,s)$ for which $s^2=id_{S\times S}$. The main tool used to describe involutive solutions is the retract relation, which is very similar to the one introduced by Etingof, Schedler and Soloviev to study involutive solution of the Yang-Baxter equation \cite{etingof1998set}. Further results involving set-theoretic solutions of the Pentagon Equation and other areas of Mathematics can be found in \cite{baaj2003unitaires,jiang2005set,kashaev2007symmetrically}. \\
In this paper, we study commutative non-degenerate set-theoretic solutions of the Pentagon Equation. By results contained in \cite[Section 3]{colazzo2020set}, this family of solutions includes the involutive ones, even if we will explicitly provide examples that are different from the ones given in \cite{colazzo2020set}. To our goal, we will introduce two key-ingredients: the retract relation, which is consistent with the one introduced in \cite{colazzo2020set} for involutive set-theoretic solutions, and the \emph{associated permutation group}, which is similar to the one introduced in \cite{etingof1998set} for involutive set-theoretic solutions of the Yang-Baxter equation. As a first main result of the paper, we will show that the action of the associated permutation group is always semi-regular. This fact will be useful in two directions: in the first one, it provides a method to construct all the commutative non-degenerate set-theoretic solutions; in the second one, it allows to detect and characterise easily all the irretractable solutions, which will be studied in detail in Section $4$. In the main result of this section, we will show that the isomorphism class of the associated permutation group is all we need to construct all the finite irretractable solutions. As a corollaries, we will extend some results contained in \cite{colazzo2020set} in the context of involutive solutions.\\
Following \cite[Problem 7]{mazzotta2023survey}, in the last part of the paper we will focus on solutions on left-zero semigroups. Here, we will develop an extension-tool useful to construct all the non-degenerate ones. A family of solutions that includes the one exhibited in \cite[Proposition 5.1]{colazzo2020set} will be provided. Moreover, the non-degenerate ones having cyclic associated permutation group will be classified. We will close the paper providing some classification-results on solutions having small size.

\section{Basic definitions and results}

 Let $S$ be a non-empty set and $s$ a map from $S\times S$ to itself. Then, the pair $(S,s)$ is said to be a \textit{set-theoretic solution of the Pentagon Equation} if the equality
 $$ s_{23}s_{13}s_{12} = s_{12}s_{23}$$
 follows, where $s_{12} := s\times id$, $s_{23} := id\times s$ and $s_{13} := (\tau \times id)(id \times s)(\tau\times id)$ are mappings from $S^3$ to itself and $\tau$ is the map from $S\times S$ to itself given by $\tau(x,y):=(y,x)$ for all $x,y\in S$. From now on, the pair $(S,s)$ will be simply called \emph{solution}.\\
 We say that $(S,s)$ is \emph{finite} if $S$ is finite as a set and \emph{bijective} if $s$ is a bijective map. Moreover, we will refer to the \emph{order} of a bijective solution $(S,s)$ as the order of the map $s$. Writing $s(x,y)=(x\cdotp y,\theta_x(y))$, by a routine computation, one can show that a pair $(S,s)$ is a solution if and only if the following equalities
 \begin{equation}\label{eq1}
     (x\cdotp y)\cdotp z=x\cdotp (y\cdotp z)
     \end{equation}
     \begin{equation}\label{eq2}
              \theta_x(y)\cdotp \theta_{x\cdotp y}(z)=\theta_x(y\cdotp z)\\
     \end{equation}
   \begin{equation}\label{eq3}
     \theta_{\theta_x(y)}\theta_{x\cdotp y}=\theta_y
 \end{equation}
 hold for all $x,y,z\in S$. If $(S,s)$ is a solution, by \cref{eq1} the algebraic structure $(S,\cdotp)$ must be a semigroup that we will call the \emph{underlying} semigroup and we will say that $(S,s)$ is a solution on the semigroup $(S,\cdotp)$. 

 \begin{exs}\label{primies}
     \begin{itemize}
         \item[a)] If $(S,\cdotp)$ is a semigroup and $f$ an idempotent endomorphism, then the map $s$ from $S\times S$ to itself given by $s(x,y):=(x\cdotp y, f(y))$ give rise to a solution of the Pentagon equation (see \cite{catino2020set1}).
         \item[b)] If $(S,+)$ is an elementary abelian $2$-group, then the map $s$ from $S\times S$ to itself given by $s(x,y):=(x,x+y)$ give rise to a bijective solution of the Pentagon equation (see \cite{colazzo2020set}).
          \item[c)] If $S$ is a a set and $f,g$ are idempotent commuting maps from $S$ to itself, then the map $s:S\times S\longrightarrow S\times S$ given by $s(x,y):=(f(x),g(y))$ give rise to a solution of the Pentagon equation (see \cite{militaru98}).
     \end{itemize}
 \end{exs}

 A special classes of solutions that are considered in literature are the commutative solutions (see \cite{baaj1993unitaires}) and the non-degenerate solutions (recently introduced in \cite{mazzotta2023survey}).

\begin{defin}
   A solution $(S,s)$ is said to be \textit{commutative} if and only if $s_{12}s_{13}=s_{13}s_{12}$.
\end{defin}

\noindent By a simple calculation, we have that $(S,s)$ is a commutative solution on a semigroup $(S,\cdotp)$ if and only if 
$$x\cdotp y\cdotp z=x\cdotp z\cdotp y\qquad and \qquad \theta_{x\cdotp y}=\theta_x$$
for all $x,y,z\in S$. The semigroups satisfying the first equality will be called \emph{weak commutative} semigroups.

\begin{defin}(\cite[Section 1]{mazzotta2023survey})
   A solution $(S,s)$ is said to be \textit{non-degenerate} if $\theta_x$ is bijective for all $x\in S$. 
\end{defin}

Example b) of \cref{primies} provides a family of commutative non-degenerate solution. By results contained in \cite{colazzo2020set}, commutative solutions and non-degenerate solutions contain the family of the involutive ones.

\begin{prop}[Corollaries $3.3$ and $3.4$, \cite{colazzo2020set}]
    Every involutive solution is commutative and non-degenerate.
\end{prop}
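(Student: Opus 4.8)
The plan is to establish commutativity first, working entirely at the level of the operators on $S^3$, and then to read off non-degeneracy as a consequence. I would abbreviate $A:=s_{12}$, $B:=s_{23}$ and $C:=s_{13}$. The key initial observation is that the hypothesis $s^2=\mathrm{id}_{S\times S}$ makes each of these an involution: one has $A^2=(s\times\mathrm{id})^2=s^2\times\mathrm{id}=\mathrm{id}$ and $B^2=\mathrm{id}\times s^2=\mathrm{id}$, while, setting $P:=\tau\times\mathrm{id}$ (so that $P^2=\mathrm{id}$), the very definition $s_{13}=(\tau\times\mathrm{id})(\mathrm{id}\times s)(\tau\times\mathrm{id})$ reads $C=PBP$, whence $C^2=PB^2P=\mathrm{id}$. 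In particular all three operators are bijective, so the Pentagon relation $BCA=AB$ is an identity between invertible maps that I may invert freely.

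From $BCA=AB$ I would left-multiply by $B$ and use $B^2=\mathrm{id}$ to get $CA=BAB$. Taking inverses of both sides of $BCA=AB$ and using that $A,B,C$ are involutions gives $ACB=BA$, and right-multiplying this by $B$ yields $AC=BAB$. Comparing the two identities produces $AC=BAB=CA$, that is $s_{12}s_{13}=s_{13}s_{12}$, which is precisely the definition of a commutative solution. By the coordinate description of commutativity recalled just after the definition in the excerpt, this is equivalent to saying that $(S,\cdot)$ is a weak commutative semigroup and that $\theta_{x\cdot y}=\theta_x$ for all $x,y\in S$.

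To finish with non-degeneracy I would expand $s^2(x,y)=\big((x\cdot y)\cdot\theta_x(y),\,\theta_{x\cdot y}(\theta_x(y))\big)$ and compare the second coordinate with $(x,y)$: the hypothesis $s^2=\mathrm{id}$ forces $\theta_{x\cdot y}(\theta_x(y))=y$ for all $x,y\in S$. Substituting the identity $\theta_{x\cdot y}=\theta_x$ obtained above, this becomes $\theta_x(\theta_x(y))=y$ for all $x,y$, i.e.\ $\theta_x^2=\mathrm{id}_S$ for every $x\in S$. Hence each $\theta_x$ is an involution, in particular a bijection, and $(S,s)$ is non-degenerate.

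The conceptual heart of the argument is the first step, namely the realization that $s^2=\mathrm{id}$ upgrades $s_{12},s_{23},s_{13}$ to involutions, so that the Pentagon relation by itself determines the commutator of $s_{12}$ and $s_{13}$. I expect this to be the only genuinely non-routine point; once commutativity is secured, the passage to $\theta_{x\cdot y}=\theta_x$ and then to $\theta_x^2=\mathrm{id}$ is a direct computation. The one detail that deserves care is the verification that $s_{13}$ is an involution, since its definition involves the flip $\tau$; writing $s_{13}$ as the $P$-conjugate of $s_{23}$ makes this transparent and also keeps the inversion step clean. This route is likely more economical than a purely coordinate-based derivation of Corollaries~$3.3$ and~$3.4$ of \cite{colazzo2020set}.
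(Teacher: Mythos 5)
Your proof is correct, and it is genuinely different from what the paper does: the paper offers no argument at all for this proposition, simply importing it as Corollaries 3.3 and 3.4 of \cite{colazzo2020set}, whereas you give a self-contained derivation. Your key step is sound and quite clean: with $A=s_{12}$, $B=s_{23}$, $C=s_{13}$, involutivity of $s$ makes all three involutions (the conjugation $C=PBP$ with $P=\tau\times\id$ handles $s_{13}$ correctly), and then the pentagon relation $BCA=AB$ yields $CA=BAB$ by left-multiplication by $B$, while inverting the relation gives $ACB=BA$ and hence $AC=BAB$ by right-multiplication by $B$; comparing the two expressions for $BAB$ gives $AC=CA$, which is exactly the paper's definition of commutativity. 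The non-degeneracy half is also correct: expanding the second coordinate of $s^2=\id$ gives $\theta_{x\cdot y}\theta_x=\id_S$, and the identity $\theta_{x\cdot y}=\theta_x$ (the coordinate form of commutativity, stated in the paper right after the definition) turns this into $\theta_x^2=\id_S$, so each $\theta_x$ is a bijection. Note that your argument actually proves slightly more than the statement, namely that every $\theta_x$ is an involution, which is part of what \cite{colazzo2020set} establish in their Section 3; and the commutativity half uses nothing about the coordinate description, only the operator identities, so it would apply verbatim to any involutive solution of the pentagon equation on any set. The one dependency worth flagging is that your non-degeneracy step leans on the paper's unproved ``simple calculation'' equivalence between $s_{12}s_{13}=s_{13}s_{12}$ and the pair of conditions ($x\cdot y\cdot z=x\cdot z\cdot y$ and $\theta_{x\cdot y}=\theta_x$); that equivalence is indeed routine to verify by expanding $s_{12}s_{13}$ and $s_{13}s_{12}$ in coordinates, but a fully self-contained write-up should include it.
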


\noindent Other examples of commutative or non-degenerate solutions come from solutions on left-zero semigroups, where a semigroup $(S,\cdotp)$ is said to be a \emph{left-zero} semigroup if $x\cdotp y=x$ for all $x,y\in S$.

\begin{prop}
    Let $(S,s)$ be a solution on a left-zero semigroup. Then, $(S,s)$ is a commutative solution. Moreover, $(S,s)$ is bijective if and only if it is non-degenerate.
\end{prop}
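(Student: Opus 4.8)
The plan is to treat the two assertions separately, both by directly exploiting the defining relation $x \cdot y = x$ of a left-zero semigroup. The single observation driving everything is that this relation collapses the first coordinate, so that $s$ takes the simple form $s(x,y) = (x, \theta_x(y))$ for all $x,y \in S$.

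For the commutativity claim, I would invoke the characterisation recorded just after the definition of a commutative solution, namely that $(S,s)$ is commutative precisely when $x \cdot y \cdot z = x \cdot z \cdot y$ and $\theta_{x \cdot y} = \theta_x$ hold for all $x,y,z \in S$. On a left-zero semigroup both conditions are immediate: iterating $x \cdot y = x$ gives $x \cdot y \cdot z = x = x \cdot z \cdot y$, so in particular every left-zero semigroup is \emph{weak commutative}; and $\theta_{x \cdot y} = \theta_x$ is obtained simply by substituting $x \cdot y = x$ into the subscript. Hence $(S,s)$ is commutative with no further computation.

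For the equivalence of bijectivity and non-degeneracy, the key structural remark is that $s(x,y) = (x, \theta_x(y))$ preserves the first coordinate, so $s$ restricts to a self-map of each fibre $\{x\} \times S$, acting there as $\theta_x$. I would then argue both implications coordinatewise. If $s$ is bijective, then its injectivity on $\{x\} \times S$ yields injectivity of $\theta_x$, while for surjectivity any preimage $(a,b)$ of $(x,z)$ under $s$ must satisfy $a = x$ upon comparing first coordinates, forcing $\theta_x(b) = z$; thus each $\theta_x$ is bijective and $(S,s)$ is non-degenerate. Conversely, if every $\theta_x$ is bijective, then $s(x,y) = s(x',y')$ forces $x = x'$ and hence $\theta_x(y) = \theta_x(y')$, giving $y = y'$; and any $(x,z)$ is hit by solving $\theta_x(y) = z$ via surjectivity of $\theta_x$. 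This shows $s$ is bijective.

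I do not expect a serious obstacle here: the entire statement becomes elementary once the form $s(x,y) = (x, \theta_x(y))$ is isolated, and neither \cref{eq2} nor \cref{eq3} is actually needed. The only point deserving a moment of care is the surjectivity half of the forward direction, where one must first use the matching of first coordinates to pin the preimage's first entry to $x$ before reading off the required value of $\theta_x$.
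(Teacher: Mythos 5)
Your proof is correct, and it fills in exactly the elementary verification the paper has in mind: the paper's own proof is simply the word ``Straightforward,'' and your argument -- checking the two conditions characterising commutativity via $x\cdot y = x$, and reading off bijectivity of $s$ fibrewise from the form $s(x,y)=(x,\theta_x(y))$ -- is the intended direct route. No gaps; the one subtle point (pinning the first coordinate of a preimage before extracting surjectivity of $\theta_x$) is handled properly.
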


\begin{proof}
    Staightforward.
\end{proof}

For a non-degenerate solution $(S,s)$ we introduce a standard permutation group.

\begin{defin}
    Let $(S,s)$ be a non-degenerate solution. Then, the subgroup of $Sym(S)$ generated by the maps $\theta_x$ will be called the \emph{associated permutation group} and will be indicated by $\mathcal{G}(S,s)$.
\end{defin}

\begin{ex}
    If $(S,+)$ is an elementary abelian $2$-group and $(S,s)$ is the solution given by Example b) of \cref{primies}, the permutation group $\mathcal{G}(S,s)$ is isomorphic to $(S,+)$.
\end{ex}

\begin{defin}
    Two solutions $(S,s)$ and $(T,t)$ are said to be \emph{isomorphic} if there exist a bijection $\phi:S\longrightarrow T$ such that $(\phi\times \phi)s=t(\phi\times \phi)$
\end{defin}

By a standard calculation, one can show that if $\phi$ is an isomorphism of two solutions $(S,s)$ and $(T,t)$, then the underlying semigroups of these two solutions $(S,\cdotp)$ and $(T,\star)$ are isomorphic and $\phi$ also is a semigroup isomorphism.\\

We close the section giving the new (but standard) notion of subsolution.

\begin{defin}
    Let $(S,s)$ be a solution on a semigroup $(S,\cdotp)$. A subset $T$ of $S$ is a \emph{subsolution} of $(S,s)$ if $(T,s_{|_{T\times T}})$ again is a solution.
\end{defin}

By a standard calculation, one can show that $T$ is a subsolution of a solution $(S,s)$ if and only if $T$ is a subsemigroup of $(S,\cdotp)$ and $\theta_x(y)\in T$ for all $x,y\in T$. 

\begin{ex}
    Let $(S,s)$ be a finite solution on a left-zero semigroup $(S,\cdotp)$. Then, every union of orbits (respect to the action of $\mathcal{G}(S,s)$) of $S$ is a subsolution of $(S,s)$.
\end{ex}

\section{Commutative non-degenerate solutions and semi-regular groups}

In this section, we focus on some properties relating the structure of the associated permutation group, the underlying semigroups and the maps $\theta_x$. In particular, we show that the action of any associated permutation group is semi-regular and we provide a construction of commutative non-degenerate solutions starting from a weak commutative semigroup $(S,\cdotp)$ and a semi-regular subgroup $\mathcal{G}$ of $Aut(S,\cdotp)$.

\bigskip

At first, we show two technical lemmas. Recall that a permutation group $\mathcal{G}$ acting on a set $S$ is said to be \emph{semi-regular} whenever the equality $g(s)=g'(s)$, with $g,g'\in \mathcal{G}$ and $s\in S$, implies $g=g'$ (or, equivalently, every element $s\in S$ has trivial stabilizer, see \cite[pg. 31]{robinson2012course}). 

\begin{lemma}\label{lemimp1}
     Let $(S,s)$ be a commutative non-degenerate solution. Then, for every orbit $S_i$ (respect to the action of $\mathcal{G}(S,s)$), there exist $z\in S_i$ such that $\theta_z=id_S$.
\end{lemma}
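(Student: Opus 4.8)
The plan is to distill \cref{eq3} into a single relation among the maps $\theta_x$ by using commutativity, and then to specialize it so that an explicit element with trivial associated map drops out of each orbit. First I would recall that, for a commutative solution, the data satisfy $\theta_{x\cdotp y}=\theta_x$ for all $x,y\in S$. Substituting this into \cref{eq3}, whose left-hand side reads $\theta_{\theta_x(y)}\theta_{x\cdotp y}$, collapses the equation to
$$\theta_{\theta_x(y)}\,\theta_x=\theta_y\qquad(x,y\in S).$$
This is the workhorse identity, and deriving it is the only place where commutativity actually enters the argument.

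The key step is then to specialize $x=y$ in this identity, which yields $\theta_{\theta_y(y)}\,\theta_y=\theta_y$. Since the solution is non-degenerate, $\theta_y$ is a bijection and may be cancelled on the right (compose with $\theta_y^{-1}$), giving $\theta_{\theta_y(y)}=\id_S$. Thus the element $z:=\theta_y(y)$ already has trivial associated map, and it is produced completely explicitly from $y$.

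It remains only to check that $z$ lands in the prescribed orbit. Here I would observe that $\theta_y$ is one of the generators of $\mathcal{G}(S,s)$, so $z=\theta_y(y)$ lies in the orbit of $y$ under $\mathcal{G}(S,s)$; choosing $y$ to be any fixed representative of the given orbit $S_i$ therefore delivers the required $z\in S_i$ with $\theta_z=\id_S$. I do not expect a genuine obstacle: once commutativity has been used to simplify \cref{eq3}, the whole proof reduces to the single substitution $x=y$ followed by cancelling the bijection $\theta_y$. The only point requiring care is the bookkeeping that $\theta_y(y)$ remains inside the orbit of $y$, which is immediate from the definition of $\mathcal{G}(S,s)$ as the group generated by the maps $\theta_x$.
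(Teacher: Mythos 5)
Your proof is correct and is essentially identical to the paper's own argument: the paper also takes an arbitrary $x$ in the orbit, sets $z:=\theta_x(x)$, and deduces $\theta_z=\theta_x\theta_x^{-1}=\id_S$ from \cref{eq3} together with commutativity and non-degeneracy. Your write-up merely makes explicit the intermediate identity $\theta_{\theta_x(y)}\theta_x=\theta_y$ and the orbit bookkeeping, which the paper leaves implicit.
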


\begin{proof}
    If $x$ is an arbitrary element of $S_i$ and we set $z:=\theta_x(x)$, we obtain that $z\in S_i$ and by \cref{eq3} we have $\theta_z=\theta_x\theta_x^{-1}=id_S$, therefore the thesis follows.
\end{proof}

\begin{lemma}\label{lemimp11}
     Let $(S,s)$ be a finite commutative non-degenerate solution and $z$ an element of $S$. Then, $\theta_{g(z)}=\theta_z g^{-1}$ for all $g\in \mathcal{G}(S,s)$.
\end{lemma}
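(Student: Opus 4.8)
The plan is to first prove the identity for the generators $\theta_x$ and then propagate it along products. For a single generator I would start from \cref{eq3}, which reads $\theta_{\theta_x(y)}\theta_{x\cdot y}=\theta_y$. Since $(S,s)$ is commutative we have $\theta_{x\cdot y}=\theta_x$, so \cref{eq3} collapses to $\theta_{\theta_x(y)}\theta_x=\theta_y$, and by non-degeneracy $\theta_x$ is invertible; hence $\theta_{\theta_x(z)}=\theta_z\theta_x^{-1}$ for every $z\in S$. This is exactly the asserted formula in the case $g=\theta_x$, since $(\theta_x)^{-1}=\theta_x^{-1}$.

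Next I would use finiteness to reduce an arbitrary $g\in\mathcal{G}(S,s)$ to such generators. Because $\mathcal{G}(S,s)$ is a finite group, the inverse of each generator $\theta_x$ is a positive power of $\theta_x$, so the subgroup generated by the maps $\theta_x$ coincides with the submonoid they generate. Thus every $g\in\mathcal{G}(S,s)$ can be written as $g=\theta_{x_1}\cdots\theta_{x_k}$ for some $x_1,\dots,x_k\in S$, and I would argue by induction on the length $k$ of such an expression, the case $k=1$ being the generator identity established above.

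For the inductive step I would write $g=h\,g'$ with $h=\theta_{x_1}$ and $g'=\theta_{x_2}\cdots\theta_{x_k}$, and assume as inductive hypothesis that $\theta_{g'(w)}=\theta_w\,(g')^{-1}$ for all $w\in S$. Then, for an arbitrary $z\in S$, one computes $\theta_{g(z)}=\theta_{h(g'(z))}=\theta_{g'(z)}h^{-1}=\theta_z\,(g')^{-1}h^{-1}=\theta_z\,(hg')^{-1}=\theta_z\,g^{-1}$, where the second equality is the generator case applied to the point $g'(z)$ and the third is the inductive hypothesis. This closes the induction and yields the claim. The computation itself is short; the only genuine points are the reduction to generators, which rests on the commutativity identity $\theta_{x\cdot y}=\theta_x$ to turn \cref{eq3} into a clean multiplicative relation, and the use of finiteness to avoid treating inverse generators separately. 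I therefore expect no substantial obstacle beyond keeping track of the order of composition.
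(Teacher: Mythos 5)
Your proof is correct and follows essentially the same route as the paper: the generator identity $\theta_{\theta_x(z)}=\theta_z\theta_x^{-1}$ obtained from \cref{eq3} plus commutativity, finiteness to write every $g\in\mathcal{G}(S,s)$ as a positive word $\theta_{x_1}\cdots\theta_{x_k}$, and induction on word length. The only (immaterial) difference is that you peel off the first generator in the inductive step while the paper peels off the last one, and you spell out the monoid-equals-group justification for the use of finiteness, which the paper leaves implicit.
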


\begin{proof}
    Since $S$ is finite, every element $g\in \mathcal{G}(S,s)$ can be written as $\theta_{x_1}...\theta_{x_n}$ for some suitable $n\in \mathbb{N}$ and $ x_1,...,x_n\in S$. Therefore, we show the claim by induction on $n$. If $n=1$, being $(S,s)$ commutative, the thesis follows by \cref{eq3}. Now, suppose that $\theta_{\theta_{x_1}...\theta_{x_{n-1}} (z)}=\theta_z (\theta_{x_1}...\theta_{x_{n-1}})^{-1}$ for a fixed $n\in \mathbb{N}$ and every $x_1,...,x_{n-1},z\in S$. Then by inductive hypothesis we have
    $$\theta_{\theta_{x_1}...\theta_{x_{n}} (z)}=\theta_{\theta_{x_n}(z)} (\theta_{x_1}...\theta_{x_{n-1}})^{-1}$$
    for all $x_1,...,x_n,z\in S$ and applying again \cref{eq3} we obtain     
    $$\theta_{\theta_{x_n}(z)}=\theta_{z} \theta_{x_n}^{-1}(\theta_{x_1}...\theta_{x_{n-1}})^{-1}=\theta_{z} (\theta_{x_1}...\theta_{x_{n}})^{-1}$$
    therefore the thesis follows.
\end{proof}

As a first result of the section, we show that the action of the permutation group $\mathcal{G}(S,s)$ of an arbitrary finite commutative non-degenerate solution is semi-regular.

\begin{theor}\label{teo1}
      Let $(S,s)$ be a finite commutative non-degenerate solution. The action of the group $\mathcal{G}(S,s)$ on $S$ is semi-regular.
\end{theor}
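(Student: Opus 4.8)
The plan is to prove semi-regularity in its pointwise form: I will show that the stabilizer of every point of $S$ in $\mathcal{G}(S,s)$ is trivial. This is equivalent to the definition recalled above, since $g(s)=g'(s)$ is the same as saying that $g'^{-1}g$ fixes $s$, so trivial stabilizers force $g=g'$. The engine of the argument will be \cref{lemimp11}, which converts the statement ``$g$ fixes a point'' into an identity between permutations that can be cancelled.

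Concretely, I would fix an arbitrary $z\in S$ together with $g\in\mathcal{G}(S,s)$ satisfying $g(z)=z$, and argue that $g=\id_S$. Applying \cref{lemimp11} to this $z$ and $g$ gives $\theta_{g(z)}=\theta_z\,g^{-1}$ in $\Sym(S)$. Since $g(z)=z$, the left-hand side is just $\theta_z$, so the identity reads $\theta_z=\theta_z\,g^{-1}$. Non-degeneracy of $(S,s)$ means $\theta_z$ is a bijection, hence a unit in $\Sym(S)$; cancelling it from $\theta_z=\theta_z\,g^{-1}$ gives $g^{-1}=\id_S$, that is $g=\id_S$. As $z$ was arbitrary, every point stabilizer is trivial and the action is semi-regular.

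I do not anticipate a genuine obstacle at the level of this statement: the real work has already been absorbed into \cref{lemimp11}, and once that identity is available the theorem is a one-line cancellation. The only points meriting attention are that $g^{-1}$ again belongs to $\mathcal{G}(S,s)$, so that \cref{lemimp11} legitimately applies to it (automatic, since $\mathcal{G}(S,s)$ is a group), and that finiteness of $S$ is used precisely where \cref{lemimp11} is invoked, as that lemma expresses each group element as a finite product $\theta_{x_1}\cdots\theta_{x_n}$. I note in passing that \cref{lemimp1} is not required for this proof, although it offers an alternative route: it supplies in each orbit a representative $z$ with $\theta_z=\id_S$, for which the cancellation above is immediate, and semi-regularity then propagates across the orbit by the standard conjugacy of point stabilizers within a single orbit.
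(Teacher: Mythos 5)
Your proposal is correct and follows essentially the same route as the paper: both proofs invoke \cref{lemimp11} and then cancel the bijection $\theta_z$, the only cosmetic difference being that you use the trivial-stabilizer formulation ($g(z)=z \Rightarrow g=\id_S$) while the paper works with the equivalent two-element form ($g(z)=g'(z) \Rightarrow g=g'$), an equivalence the paper itself records in its definition of semi-regularity.
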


\begin{proof}
  If $g,g'\in \mathcal{G}$ and $z\in S$ are such that $g(z)=g'(z)$, then we have that $\theta_{g(z)}=\theta_{g'(z)}$ and by \cref{lemimp11} the equality $\theta_z g^{-1}=\theta_z g'^{-1}$ follows, hence $g=g'$ and the thesis follows.
\end{proof}

\begin{rem}
   In \cite[Lemma 2.4]{colazzo2020set} it was shown that if $(S,s)$ is a bijective solution on a left-zero semigroup, then $(S,s)$ is non-degenerate and all the $\theta_x$ that are different from the identity map are fixed-point-free permutations. Since all the bijective solutions on left-zero semigroups are commutative, \cref{teo1} give an extension of the second part of \cite[Lemma 2.4]{colazzo2020set} in the finite case. 
\end{rem}

Following \cite[Chapter 1]{wielandt2014finite}, if $G$ is a group acting on a set $S$, a subset $S_1$ of $S$ is called a \emph{block} if for each $g\in G$ the set $g(S_1)$ coincides with $S_1$ or has no elements common with $S_1$. A partition $\{S_1,...,S_r \}$ of $S$ is said to be a \emph{system of blocks} if each $S_i$ is a block. The following result provides a method to construct new commutative non-degenerate solutions on a weak commutative semigroup.

\begin{theor}\label{cosimpo}
      Let $(S,\cdotp)$ be a weak commutative semigroup, $\sim$ a congruence of $S$ such that $S/\sim$ is a left-zero semigroup and $\mathcal{G}$ be a subgroup of $Aut(S,\cdotp)$ such that its action on $S$ is semi-regular. Suppose that $S_1,...,S_r$ are the orbits respect to this action and $\mathcal{T}:=\{ T_1,...,T_m\}$ the partition induced by $\sim$. Moreover, assume that every $S_i$ is a set of representatives of $S/\sim$ and $\mathcal{T}$ is a system of blocks of $\mathcal{G}$. \\
      Now, choose $T$ in $\mathcal{T}$ and let $\theta$ be the map from $S$ to $\mathcal{G}$ given by $\theta_{s}:=id_S$ and $\theta_{g(s)}:=g^{-1}$ for all $s\in T$ and $g\in \mathcal{G}$.
      Then, the map $s:S\times S\longrightarrow S\times S$ given by $s(x,y):=(x\cdotp y,\theta_x(y))$ give rise to a commutative non-degenerate solution.
\end{theor}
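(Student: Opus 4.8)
The plan is to verify the three defining identities \cref{eq1}, \cref{eq2} and \cref{eq3} for the map $s(x,y)=(x\cdotp y,\theta_x(y))$, and then to read off commutativity and non-degeneracy from the explicit form of the maps $\theta_x$. Before anything else I would check that $\theta$ is well defined. Given $x\in S$, let $S_i$ be its orbit; since every orbit is a set of representatives of $S/\sim$, the intersection $S_i\cap T$ consists of a single point $s$, and because $x$ and $s$ lie in the same orbit there is $g\in\mathcal{G}$ with $g(s)=x$. Semi-regularity forces this $g$ to be unique, and the transversal property forces $s$ to be the unique element of $T$ in the orbit of $x$; hence each $x$ has a unique expression $x=g(s)$ with $s\in T$ and $g\in\mathcal{G}$, so $\theta_x:=g^{-1}$ is unambiguous. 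As each $\theta_x$ lies in $\mathcal{G}\subseteq\Aut(S,\cdotp)\subseteq\Sym(S)$, non-degeneracy is immediate, and \cref{eq1} holds because $(S,\cdotp)$ is a semigroup.

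The heart of the argument is the identity $\theta_{x\cdotp y}=\theta_x$, which is also the commutativity condition $\theta_{x\cdotp y}=\theta_x$ recalled after the definition of commutative solution. Writing $x=g(s)$ with $s\in T$, so that $\theta_x=g^{-1}$, I would use that $T$ is a block and $\mathcal{T}$ a system of blocks: then $g(T)$ is again a class of $\sim$, and since it contains $g(s)=x$ it must be the $\sim$-class $[x]$ of $x$. Because $S/\sim$ is a left-zero semigroup we have $x\cdotp y\sim x$, so $x\cdotp y\in[x]=g(T)$; thus $x\cdotp y=g(t)$ for some $t\in T$, and the definition of $\theta$ yields $\theta_{x\cdotp y}=g^{-1}=\theta_x$. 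With this in hand \cref{eq2} follows quickly: it reduces to $\theta_x(y)\cdotp\theta_x(z)=\theta_x(y\cdotp z)$, which holds because $\theta_x\in\Aut(S,\cdotp)$.

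For \cref{eq3} I would parametrise $x=g(s)$ and $y=h(t)$ with $s,t\in T$ and $g,h\in\mathcal{G}$, so that $\theta_x=g^{-1}$ and $\theta_y=h^{-1}$. Then $\theta_x(y)=g^{-1}(h(t))=(g^{-1}h)(t)$ with $t\in T$, whence by definition $\theta_{\theta_x(y)}=(g^{-1}h)^{-1}=h^{-1}g$; combining this with $\theta_{x\cdotp y}=g^{-1}$ from the previous step gives
$$\theta_{\theta_x(y)}\theta_{x\cdotp y}=h^{-1}g\,g^{-1}=h^{-1}=\theta_y,$$
which is exactly \cref{eq3}. This establishes that $(S,s)$ is a solution; it is non-degenerate by the first step, and it is commutative because $(S,\cdotp)$ is weak commutative, giving $x\cdotp y\cdotp z=x\cdotp z\cdotp y$, together with the identity $\theta_{x\cdotp y}=\theta_x$, so the characterisation of commutative solutions applies.

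I expect the main obstacle to be the identity $\theta_{x\cdotp y}=\theta_x$ of the second paragraph: it is the only place where the congruence $\sim$, the left-zero quotient and the block hypothesis on $\mathcal{T}$ genuinely interact, and the verification of both \cref{eq2} and \cref{eq3} rests on it. The well-definedness of $\theta$ is the essential preliminary, but once the unique representation $x=g(s)$ is available the remaining identities are short group-theoretic computations.
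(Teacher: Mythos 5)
Your proposal follows essentially the same route as the paper's proof: well-definedness of $\theta$ from semi-regularity together with the transversal property, the key identity $\theta_{x\cdot y}=\theta_x$, then \cref{eq2} from $\theta_x\in\Aut(S,\cdot)$ and \cref{eq3} from the parametrisation $x=g(s)$, $y=h(t)$ with $s,t\in T$. Indeed, the paper reduces \cref{eq3} to $\theta_{\theta_x(y)}=\theta_y\theta_x^{-1}$ and verifies it by exactly your computation $\theta_{\theta_a(g(s))}=(\theta_a g)^{-1}=g^{-1}\theta_a^{-1}$.

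The one step that needs care is your claim that, because $\mathcal{T}$ is a system of blocks, $g(T)$ is again a $\sim$-class. With the definition of ``system of blocks'' actually adopted in the paper (each $T_i$ satisfies $g(T_i)=T_i$ or $g(T_i)\cap T_i=\emptyset$), this inference fails for intransitive actions: take $S=\{1,\dots,6\}$ with the left-zero product, $\mathcal{G}$ generated by $g=(1\,2\,3)(4\,5\,6)$, and classes $T_1=\{1,4\}$, $T_2=\{2,6\}$, $T_3=\{3,5\}$. Each $T_i$ is a block and each orbit is a set of representatives, yet $g(T_1)=\{2,5\}$ is not a class; accordingly, with $T=T_1$ the map $\theta$ is not constant on the class $T_2$ (one gets $\theta_2=g^{-1}$ but $\theta_6=g$). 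To be fair, the paper's own proof makes the same silent leap: its ``$x\cdot y\sim x$ \dots hence $\theta_{x\cdot y}=\theta_x$'' presupposes that $\theta$ is constant on $\sim$-classes, which is exactly what fails above (the theorem survives in that example only because the semigroup is left-zero). The identity you need is nevertheless provable, and in fact without any block hypothesis: writing $x=g(s)$ with $s\in T$ and using $g\in\Aut(S,\cdot)$, one has
$$x\cdot y=g(s)\cdot y=g\left(s\cdot g^{-1}(y)\right),$$
and $s\cdot g^{-1}(y)\sim s$ because $S/\sim$ is a left-zero semigroup, so $s\cdot g^{-1}(y)\in T$ since $T$ is itself a $\sim$-class; hence $\theta_{x\cdot y}=g^{-1}=\theta_x$ directly from the definition of $\theta$. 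With this patch your argument (and the paper's) is complete; everything else in your proposal is correct.
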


\begin{proof}
Being $(S,\cdotp)$ a semigroup, \cref{eq1} follows. Now, since the action of $\mathcal{G}$ on $S$ is semi-regular and $|S_i\cap T_j|=1$ for all $i\in \{1,...,r\}$, $j\in \{1,...,m\}$, a standard calculation shows that the map $\theta$ is well-defined. Moreover, being $S/\sim$ a left-zero semigroup, we have that $x\cdotp y\sim x$ for all $x,y\in S$ and hence $\theta_{x\cdotp y}=\theta_x$ for all $x,y\in X$. This facts, toegheter with $\mathcal{G}\leq Aut(S,\cdotp)$, implies \cref{eq2}. Therefore, to show the thesis, since $\theta_x\in Sym(S)$ for all $x\in S$ and $S$ is a weak commutative semigroup, it is sufficient to prove the equality
$$\theta_{\theta_x(y)}=\theta_y \theta_x^{-1} $$
for all $x,y\in S$. Let $a,b\in S$ and consider $i\in \{1,...,r\}$ and $s\in S$ such that $b\in S_i$ and $s\in S_i\cap T$. Therefore there exist $g\in \mathcal{G}$ such that $g(s)=b$ and hence 

%$$\theta_{\theta_a(b)}=\theta_{\theta_a(g(s_i))}=\theta_{g(s_i)}\theta_a^{-1}= g^{-1}\theta_a^{-1}=\theta_b\theta_a^{-1}$$

$$\theta_{\theta_a(b)}=\theta_{\theta_a(g(s_i))}=(\theta_ag)^{-1}=g^{-1}\theta_a^{-1}=\theta_b\theta_a^{-1}$$

and since $a $ and $b$ are arbitrary elements of $S$ the thesis follows.
\end{proof}

%\begin{defin}
%    From now on, the solution constructed as in the previous theorem will be indicated by $(S,\cdotp,\sim,\mathcal{G},T,\{s_i\}_{i})$ 
%\end{defin}

We apply the previous Theorem on a simple example.

\begin{ex}
    Let $(S,\cdotp)$ be a left group. Then, $S$ can be identified with $E\times G$, where $E$ is a left-zero semigroup and $G$ a group. Let $\mathcal{G}$ be a regular permutation subgroup of $Sym(E)$ and $\sim$ be the congruence on $S$ induced by the equality of the first component. With the same notation of \cref{cosimpo} we have that $\mathcal{T}=\{\{e\}\times G \}_{e\in E}$ and identifying $\mathcal{G}$ with a subgroup of $Aut(S,\cdotp)$ by $g(e,f):=(g(e),f)$ for all $g\in \mathcal{G}$, $e\in E$ and $f\in G$, we obtain that every orbit $S_{i}$ of this action is of the form $E\times \{g_i\}$, with $g_i\in G$. Moreover, we have that $|S_{i}\cap T_j|=1$ for all $i,j$ and $g(\{e\}\times G)=\{g(e)\}\times G$ for all $e\in E$ and $g\in \mathcal{G}$, therefore every $S_{i}$ is a set of representatives of $S/\sim$ for every $i\in \{1,...,r\}$ and $\mathcal{T}$ is a system of blocks of $\mathcal{G}$. Therefore, all the hypothesis of \cref{cosimpo}  hold and hence we can use them to construct a commutative non-degenerate solution on $(S,\cdotp)$.
\end{ex}

We close the section showing that the construction exhibited in \cref{cosimpo} can be greatly simplified if we want to provide solution on left-zero semigroups.

\begin{cor}\label{coroperir}
    Let $(S,\cdotp)$ be a left-zero semigroup, $\mathcal{G}$ be a subgroup of $Sym(S)$ and suppose that the action on $S$ is semi-regular. Moreover, let $S_1,...,S_r$ be the orbits of $S$ respect to this action. For every $i\in \{1,...,r\}$, let $s_i$ be an element of $S_i$ and let $\theta$ be the map from $S$ to $\mathcal{G}$ given by $\theta_{s_i}:=id_S$ and $\theta_{x_i}:=g^{-1}$ for all $x_i\in S_i$ and $i\in \{1,...,r\}$, where $g$ is the unique element of $\mathcal{G}$ such that $g(s_i)=x_i$. Then, the map $s:S\times S\longrightarrow S\times S$ given by $s(x,y):=(x,\theta_x(y))$ give rise to a commutative non-degenerate solution on the semigroup $(S,\cdotp)$.
\end{cor}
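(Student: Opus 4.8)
The plan is to recognize that \cref{coroperir} is simply the specialization of \cref{cosimpo} to the case where the underlying semigroup is already a left-zero semigroup, so the proof should consist of verifying that the hypotheses of \cref{cosimpo} are automatically satisfied and then reading off the conclusion. First I would take the congruence $\sim$ in \cref{cosimpo} to be the \emph{trivial} congruence, namely equality on $S$; since $(S,\cdotp)$ is already a left-zero semigroup, the quotient $S/\sim$ is $(S,\cdotp)$ itself and is therefore a left-zero semigroup, so that hypothesis holds. With this choice the blocks $T_1,\dots,T_m$ are the singletons $\{x\}$ for $x\in S$, and the condition that $\mathcal{T}$ be a system of blocks of $\mathcal{G}$ is vacuous (every $g\in\mathcal{G}$ permutes the singletons among themselves). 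Next I would check the remaining compatibility conditions: since each $T_j$ is a singleton and the orbits $S_1,\dots,S_r$ partition $S$, the intersection $S_i\cap T_j$ has exactly one element precisely when the single point of $T_j$ lies in $S_i$, so each orbit $S_i$ is automatically a set of representatives of $S/\sim$.

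The only genuinely substantive point is the hypothesis in \cref{cosimpo} that $\mathcal{G}$ be a subgroup of $\Aut(S,\cdotp)$, whereas in \cref{coroperir} we are merely given $\mathcal{G}\leq \Sym(S)$. Here I would observe that on a left-zero semigroup every bijection of $S$ is automatically an automorphism: for any $g\in\Sym(S)$ and $x,y\in S$ one has $g(x\cdotp y)=g(x)=g(x)\cdotp g(y)$, using $x\cdotp y=x$ and $g(x)\cdotp g(y)=g(x)$. Thus $\Sym(S)=\Aut(S,\cdotp)$ and the semi-regular subgroup $\mathcal{G}$ of $\Sym(S)$ is a semi-regular subgroup of $\Aut(S,\cdotp)$, as required. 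This is the step I expect to be the main (though minor) obstacle, in the sense that it is the one nontrivial identification one must make; everything else is a matter of matching notation.

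Finally I would match the definition of the map $\theta$. In \cref{cosimpo} one fixes a single block $T$ and sets $\theta_s:=\id_S$ for $s\in T$ and $\theta_{g(s)}:=g^{-1}$; but since here the blocks are singletons, choosing $T$ amounts to choosing one representative per orbit, which is exactly the data $s_1,\dots,s_r$ with $\theta_{s_i}:=\id_S$ and $\theta_{x_i}:=g^{-1}$ where $g(s_i)=x_i$. Because the action is semi-regular, the element $g$ with $g(s_i)=x_i$ is unique, so $\theta$ is well-defined, in agreement with \cref{cosimpo}. Finally, since $x\cdotp y=x$ in a left-zero semigroup, the solution map $s(x,y)=(x\cdotp y,\theta_x(y))$ from \cref{cosimpo} reduces to $s(x,y)=(x,\theta_x(y))$, which is exactly the map in the statement. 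Having verified every hypothesis of \cref{cosimpo} and identified the data, I would conclude that $(S,s)$ is a commutative non-degenerate solution by direct appeal to \cref{cosimpo}.
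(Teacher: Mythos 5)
Your overall strategy---reading \cref{coroperir} as a specialization of \cref{cosimpo}---is exactly the paper's, and your observation that $\Sym(S)=\Aut(S,\cdotp)$ for a left-zero semigroup (so the hypothesis $\mathcal{G}\leq\Sym(S)$ can be upgraded to $\mathcal{G}\leq\Aut(S,\cdotp)$) is correct and genuinely needed; the paper dismisses this point with ``clearly, it is not restrictive.'' However, your choice of the congruence is wrong, and this is a real gap. If $\sim$ is equality, the classes $T_j$ are the singletons $\{x\}$, $x\in S$, and the hypothesis of \cref{cosimpo} that \emph{every} orbit $S_i$ be a set of representatives of $S/\sim$ means $|S_i\cap T_j|=1$ for all $i$ \emph{and all} $j$; with singleton classes this forces every point of $S$ to lie in every orbit, i.e.\ $S_i=S$ for each $i$, which holds only when $\mathcal{G}$ acts transitively (hence regularly). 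Your sentence ``the intersection $S_i\cap T_j$ has exactly one element precisely when the single point of $T_j$ lies in $S_i$'' is true, but it establishes the opposite of what you conclude: as soon as there are $r\geq 2$ orbits, some $T_j$ misses $S_i$ and the hypothesis fails. Relatedly, the data does not match: in \cref{cosimpo} one chooses a \emph{single} block $T\in\mathcal{T}$ and sets $\theta_s:=\id_S$, $\theta_{g(s)}:=g^{-1}$ for $s\in T$, $g\in\mathcal{G}$; with singleton blocks, choosing $T$ means choosing one point of $S$ (not one point per orbit, as you assert), so $\theta$ would only be defined on that single point's orbit and cannot coincide with the map $\theta$ of \cref{coroperir}.

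The correct reduction, which is the paper's proof, goes the other way around: take the blocks to be the $\mathcal{G}$-translates of your transversal, namely $T_g:=\{g(s_1),\dots,g(s_r)\}$ for $g\in\mathcal{G}$. Semi-regularity guarantees that $\mathcal{T}:=\{T_g\}_{g\in\mathcal{G}}$ is a partition of $S$; on a left-zero semigroup every equivalence relation is a congruence with left-zero quotient, so the induced relation $\sim$ qualifies; $h(T_g)=T_{hg}$ shows $\mathcal{T}$ is a system of blocks; and $S_i\cap T_g=\{g(s_i)\}$ shows each orbit is a set of representatives of $S/\sim$. Choosing $T:=T_{\id}=\{s_1,\dots,s_r\}$, the map $\theta$ produced by \cref{cosimpo} is then precisely the map $\theta$ in the statement of \cref{coroperir}, and the conclusion follows. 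Your write-up would be correct only in the case $r=1$ (regular action), where your trivial congruence and the paper's block system coincide up to relabeling.
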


\begin{proof}
    Clearly, it is not restrictive supposing that $\mathcal{G}$ is a subgroup of $Aut(S,\cdotp)$. Now, for every $g\in \mathcal{G}$, set $T_g:=\{g(s_i)\}_{s_i\in S_i}$. Clearly, $\mathcal{T}:=\{T_g\}_{g\in \mathcal{G}}$ is a system of blocks for $\mathcal{G}$ and, if we take $\sim$ as the equivalence relation induced by $T$, every $S_i$ is a set of representatives for $\sim$. Therefore, the thesis follows by \cref{cosimpo}.
\end{proof}

\section{Retraction of commutative non-degenerate solutions}

Following \cite[Problem 4]{mazzotta2023survey}, in this section we extend the notion of retraction, introduced in \cite[Section $4$]{colazzo2020set} for involutive solutions, to non-degenerate commutative ones. We will show that some properties proved in  \cite[Section $4$]{colazzo2020set} for involutive solutions also follows in this more general context. As a main application of the retract relation, we will show that the construction provided in \cref{cosimpo} allow to construct \emph{all} commutative non-degenerate solutions.\\

Given a commutative non-degenerate solution $(S,s)$ on a semigroup $(S,\cdotp)$, consider the relation $\sim$, which we will call \textit{retraction}, given by 
$$\forall x,y\in S \quad x\sim y :\Longleftrightarrow \theta_x=\theta_y. $$
Now, we show that $\sim$ induces a solution on the quotient $S/\sim$ (the idea is the same used in \cite{colazzo2020set}, but here we can not use the hypothesis $s^2=id_{S\times S}$). Suppose that $x_1\sim x_2$ and $y_1\sim y_2$. Then, we have that $\theta_{x_1\cdotp y_1}=\theta_{x_1}=\theta_{x_2}=\theta_{x_2\cdotp y_2}$ for all $x_1,x_2,y_1,y_2\in S$, hence $\sim$ is a congruence of the semigroup $(S,\cdotp)$. Moreover, since all the $\theta_x$ are bijective, it follows that  $$\theta_{\theta_{x_1}(y_1)}=\theta_{y_1}\theta^{-1}_{x_1\cdotp y_1}=\theta_{y_1}\theta^{-1}_{x_1}=\theta_{y_2}\theta^{-1}_{x_2}=\theta_{y_2}\theta^{-1}_{x_2\cdotp y_2}=\theta_{\theta_{x_2}(y_2)}$$
and hence $\theta_{x_1}(y_1)\sim \theta_{x_2}(y_2)$.\\
In this way, denoting by $\bar{x}$ the $\sim$-class of an element $x$ of $S$, we showed that the product $\bar{x}\cdotp \bar{y}:=\overline{x\cdotp y}$ and the map $\theta_{\bar{x}}(\bar{y}):=\overline{\theta_x(y)}$ are well-defined. These facts allow to provide a solution $\bar{s}$ on $S/\sim$ by $\bar{s}(\bar{x},\bar{y}):=(\bar{x}\cdotp \bar{y},\theta_{\bar{x}}(\bar{y})) $
for all $\bar{x},\bar{y}\in S/\sim$. We will call this solution \textit{retraction} of $(S,s)$ and we indicate it by $\Ret(S,s)$. By a standard calculation, it follows that $\bar{s}$ again is commutative and  the semigroup $\bar{S}:=S/\sim$ is a left-zero semigroup. Clearly, all the maps $\bar{\theta}_{\bar{x}}$ are surjective, hence $(\bar{S},\bar{s})$ is non-degenerate if $S$ has finite size. In the following, we show that finiteness is not necessary to non-degeneracy. 
%At first, we introduce a simple but important lemma.

%\begin{lemma}\label{lemimp}
%     Let $(S,s)$ be a commutative non-degenerate solution. Then, $\theta_{\theta_x^n(y)}=\theta_y\theta_x^{-n}$ for all $n\in \mathbb{N}$. In particular, there exist $z\in S$ such that $\theta_z=id_S$.
%\end{lemma}

%\begin{proof}
%    Since $\theta_{\theta_x(y)}=\theta_y\theta_x^{-1}$ for all $x,y\in S$, the first part of the thesis follows by induction on $n$. To obtain the second part, if $x$ is an arbitrary element of $S$ and we set $z:=\theta_x(x)$, we obtain $\theta_z=\theta_x\theta_x^{-1}=id_S$.
%\end{proof}

\begin{theor}
    Let $(S,s)$ be a commutative non-degenerate solution. Then, $\Ret(S,s)$ is a commutative non-degenerate solution. 
\end{theor}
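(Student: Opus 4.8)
The plan is to establish that $\Ret(S,s)$ is a solution, is commutative, and is non-degenerate, with the entire difficulty concentrated in the last property once finiteness has been dropped. The discussion immediately preceding the statement already verifies that $\sim$ is a congruence, that the induced operations $\bar{x}\cdotp\bar{y}:=\overline{x\cdotp y}$ and $\bar{\theta}_{\bar{x}}(\bar{y}):=\overline{\theta_x(y)}$ are well-defined, and that the resulting pair $(\bar{S},\bar{s})$ is a commutative solution on the left-zero semigroup $\bar{S}=S/\sim$. So for the solution and commutativity parts I would simply invoke this preceding computation: equations \cref{eq1}, \cref{eq2}, \cref{eq3} pass to the quotient because $\sim$ respects both $\cdotp$ and the maps $\theta_x$, and the weak-commutativity identity together with $\theta_{\bar{x}\cdotp\bar{y}}=\theta_{\bar{x}}$ survives the quotient verbatim.

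The real content is non-degeneracy, i.e. that each $\bar{\theta}_{\bar{x}}$ is a \emph{bijection} of $\bar{S}$, not merely surjective. Surjectivity is already noted, so the task reduces to injectivity of $\bar{\theta}_{\bar{x}}$ when $S$ may be infinite. I would argue directly: suppose $\bar{\theta}_{\bar{x}}(\bar{y}_1)=\bar{\theta}_{\bar{x}}(\bar{y}_2)$, which unwinds to $\theta_x(y_1)\sim\theta_x(y_2)$, i.e. $\theta_{\theta_x(y_1)}=\theta_{\theta_x(y_2)}$. The goal is to deduce $\bar{y}_1=\bar{y}_2$, that is $\theta_{y_1}=\theta_{y_2}$. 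The key tool is \cref{eq3} in the form $\theta_{\theta_x(y)}\,\theta_{x\cdotp y}=\theta_y$, which under commutativity ($\theta_{x\cdotp y}=\theta_x$) becomes $\theta_{\theta_x(y)}=\theta_y\,\theta_x^{-1}$. Applying this to both $y_1$ and $y_2$ gives $\theta_{y_1}\theta_x^{-1}=\theta_{\theta_x(y_1)}=\theta_{\theta_x(y_2)}=\theta_{y_2}\theta_x^{-1}$, and cancelling the invertible $\theta_x^{-1}$ on the right yields $\theta_{y_1}=\theta_{y_2}$, hence $\bar{y}_1=\bar{y}_2$. Crucially this uses only that each $\theta_x$ is a bijection (non-degeneracy of $(S,s)$) and the commutativity identity, with no appeal to finiteness of $S$.

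The main obstacle I anticipate is precisely the one the authors flag in the text: in \cite{colazzo2020set} non-degeneracy of the retraction in the infinite case was obtained using $s^2=\id_{S\times S}$, which is unavailable here. My plan sidesteps that by exploiting that the relation $x\sim y$ is defined through the \emph{maps} $\theta$ themselves, so injectivity of $\bar\theta_{\bar x}$ translates into an equation between the maps $\theta_{y_1}$ and $\theta_{y_2}$ that \cref{eq3} handles uniformly. I would therefore organise the proof as: (i) cite the pre-statement discussion for "solution" and "commutative"; (ii) record surjectivity of each $\bar\theta_{\bar x}$; (iii) prove injectivity by the cancellation argument above, emphasising that it holds regardless of $|S|$. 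A small point to check carefully is that the chain $\theta_{\theta_x(y)}=\theta_y\theta_x^{-1}$ is genuinely an identity of permutations of $S$ (it is, since all $\theta_x\in\Sym(S)$), so the right-cancellation of $\theta_x^{-1}$ is legitimate; this is the only place where well-definedness of $\bar\theta$ interacts delicately with the infinite setting, and I expect it to go through cleanly.
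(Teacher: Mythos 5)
Your proof is correct, and every step genuinely avoids finiteness, which is the whole point of the theorem; but your route to injectivity differs from the paper's. The paper invokes \cref{lemimp1} to produce $z\in S$ with $\theta_z=\id_S$, hence a class $\bar z$ with $\bar{\theta}_{\bar z}=\id_{S/\sim}$, and then reads injectivity off the quotient-level instance of \cref{eq3}, namely $\bar{\theta}_{\bar{\theta}_{\bar x}(\bar z)}\bar{\theta}_{\bar x}=\id_{S/\sim}$, i.e.\ each $\bar{\theta}_{\bar x}$ has an explicit left inverse. You instead stay entirely at the level of $S$: from $\bar{\theta}_{\bar x}(\bar y_1)=\bar{\theta}_{\bar x}(\bar y_2)$ you get $\theta_{\theta_x(y_1)}=\theta_{\theta_x(y_2)}$, rewrite both sides via the permutation identity $\theta_{\theta_x(y)}=\theta_y\theta_x^{-1}$ (which is \cref{eq3} plus commutativity plus bijectivity of $\theta_x$ — exactly the identity the paper already derived when checking that $\bar{\theta}$ is well defined), and cancel $\theta_x^{-1}$ on the right in $\Sym(S)$. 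Your argument is thus slightly more self-contained, since it never uses \cref{lemimp1}, while the paper's argument has the small bonus of exhibiting a concrete left inverse of $\bar{\theta}_{\bar x}$, namely $\bar{\theta}_{\bar{\theta}_{\bar x}(\bar z)}$, rather than deducing injectivity abstractly from a cancellation. Both proofs hinge on the same specialization of \cref{eq3} under commutativity, so the difference is one of packaging rather than of underlying mechanism; either version would serve the paper equally well.
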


\begin{proof}
      By commutativity of $(S,s)$, the one of $\Ret(S,s)$ also follows. We only have to show that $\Ret(S,s)$ is non-degenerate. Since all the maps $\theta_x$ are surjective, we obtain that all the maps $\bar{\theta}_{\bar{x}}$ also are surjective. By \cref{lemimp1}, there exist $z\in S$ such that $\theta_z=id_S$, hence we have that $\bar{\theta}_{\bar{z}}=id_{S/\sim}$. Since $\bar{\theta}_{\bar{\theta}_{\bar{x}}(\bar{y})}\bar{\theta}_{\bar{x}}=\bar{\theta}_{\bar{y}}$ for all $\bar{x},\bar{y}\in \bar{S}$ if we set $\bar{y}:=\bar{z}$ we obtain 
      $$\bar{\theta}_{\bar{\theta}_{\bar{x}}(\bar{z})}\bar{\theta}_{\bar{x}}=id_{S/\sim} $$
      hence $\bar{\theta}_{\bar{x}}$ is injective and therefore the thesis follows.
\end{proof}

\begin{defin}
    A commutative non-degenerate solution $(S,s)$ is said to be \textit{irretractable} if $(S,s)=\Ret(S,s)$, otherwise, we will call it \textit{retractable}.
\end{defin}

In the following result we show that all the commutative non-degenerate solutions arise by \cref{cosimpo}. At first, we need a crucial lemma for the rest of the paper.

\begin{lemma}\label{lemgr}
    Let $(S,s)$ a finite commutative non-degenerate solution and $g\in \mathcal{G}(S,s)$. Then, for every orbit $Z$ respect to the action of $\mathcal{G}(S,s) $ there exist a unique $x_z\in Z$ such that $\theta_{x_z}=g$. 
\end{lemma}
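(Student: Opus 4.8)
The plan is to reduce the statement to the three facts already established: the existence of an identity representative in each orbit (\cref{lemimp1}), the twisting formula $\theta_{g(z)}=\theta_z g^{-1}$ (\cref{lemimp11}), and the semi-regularity of the action (\cref{teo1}). The whole point will be to observe that, restricted to a single orbit, the assignment $x\mapsto\theta_x$ is nothing but the inversion map of $\mathcal{G}(S,s)$ read through a natural identification of the orbit with the group.

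First I would fix the orbit $Z$ and, using \cref{lemimp1}, choose $z_0\in Z$ with $\theta_{z_0}=\id_S$. Since $Z$ is an orbit, every element of $Z$ has the form $h(z_0)$ for some $h\in\mathcal{G}(S,s)$, and by semi-regularity (\cref{teo1}) this $h$ is uniquely determined by the element; hence $h\mapsto h(z_0)$ is a bijection from $\mathcal{G}(S,s)$ onto $Z$. Applying \cref{lemimp11} with $z=z_0$ then gives $\theta_{h(z_0)}=\theta_{z_0}h^{-1}=h^{-1}$ for every $h\in\mathcal{G}(S,s)$, so that $\theta$ acts on $Z$ as inversion under the above identification.

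With this in hand, existence and uniqueness are immediate. For existence, given $g\in\mathcal{G}(S,s)$ I set $x_z:=g^{-1}(z_0)\in Z$, and \cref{lemimp11} yields $\theta_{x_z}=\theta_{z_0}(g^{-1})^{-1}=g$. For uniqueness, if $x,x'\in Z$ both satisfy $\theta_x=\theta_{x'}=g$, writing $x=h(z_0)$ and $x'=h'(z_0)$ gives $h^{-1}=g=h'^{-1}$, whence $h=h'$ and therefore $x=x'$.

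The argument is a direct assembly of the preceding results, so I do not expect a genuine obstacle; the only subtlety is that semi-regularity together with the existence of the identity representative is exactly what forces the bijection $Z\cong\mathcal{G}(S,s)$ to be compatible with inversion. Finiteness of $S$ is needed, but it enters only through the inputs \cref{lemimp11} and \cref{teo1}, both of which are part of the hypotheses here.
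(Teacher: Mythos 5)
Your proof is correct and follows essentially the same route as the paper: existence via \cref{lemimp1} and $x_z:=g^{-1}(z_0)$ with \cref{lemimp11}, and uniqueness again via the formula $\theta_{h(z_0)}=h^{-1}$. The only cosmetic difference is that you invoke semi-regularity (\cref{teo1}) to set up the bijection $\mathcal{G}(S,s)\cong Z$, which is not actually needed — the paper's uniqueness step just picks \emph{some} $h$ with $h(x)=y$ and deduces $h=\id_S$ from \cref{lemimp11} alone.
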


\begin{proof}
   Let $Z$ be an orbit of $S$ respect to to the action of $\mathcal{G}(S,s) $. Then, by \cref{lemimp1} there exist $z\in Z$ such that $\theta_z=id_{S}$. By \cref{lemimp11} we have $\theta_{g^{-1}(z)}=g$,  and since $g^{-1}(z)\in Z$ the existence follows. Now, if $x$ and $y$ are two elements of $Z$ such that $\theta_x=\theta_y=g$, if we call $h$ the element of $\mathcal{G}(S,s)$ such that $h(x)=y$ by \cref{lemimp11} we obtain $g=\theta_y=\theta_{h(x)}=\theta_xh^{-1}=gh^{-1}$, therefore $h=id_S$ and hence the thesis follows.
\end{proof}

\begin{cor}\label{invsol}
    Every commutative non-degenerate solution can be constructed by \cref{cosimpo}.
\end{cor}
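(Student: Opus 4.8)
The goal is to show every finite commutative non-degenerate solution $(S,s)$ arises via the construction of \cref{cosimpo}. The plan is to recover, from an abstract solution, exactly the data that \cref{cosimpo} consumes: the underlying weak commutative semigroup, a congruence $\sim$ whose quotient is left-zero, a semi-regular subgroup $\mathcal{G}\leq\Aut(S,\cdotp)$, a system of blocks $\mathcal{T}$, and a distinguished block $T$, subject to the compatibility conditions (each orbit a set of representatives of $S/\sim$, and $\mathcal{T}$ a system of blocks). Since $(S,s)$ is commutative, $(S,\cdotp)$ is already weak commutative, so that ingredient is free.

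First I would take $\mathcal{G}:=\mathcal{G}(S,s)$ as the semi-regular subgroup: its semi-regularity is exactly \cref{teo1}, and the fact that each $\theta_x$ is a semigroup automorphism of $(S,\cdotp)$ should follow from \cref{eq2} together with the commutativity identity $\theta_{x\cdotp y}=\theta_x$, so $\mathcal{G}\leq\Aut(S,\cdotp)$. For the congruence I would take $\sim$ to be the retraction relation already introduced in \cref{teo1}'s section, which is shown there to be a congruence with $S/\!\sim$ a left-zero semigroup; let $\mathcal{T}=\{T_1,\dots,T_m\}$ be the induced partition. The orbits $S_1,\dots,S_r$ of the $\mathcal{G}$-action furnish the remaining partition.

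The heart of the matter is verifying the two compatibility hypotheses, and this is where \cref{lemgr} does the decisive work. That each orbit $Z$ is a set of representatives of $S/\!\sim$ is precisely the content of \cref{lemgr}: it gives, for each value $g\in\mathcal{G}$ (equivalently, for each $\sim$-class, since two elements are $\sim$-related exactly when their $\theta$-maps coincide), a \emph{unique} $x\in Z$ with $\theta_x=g$, so $Z$ meets every block in exactly one point. To see $\mathcal{T}$ is a system of blocks, I would use \cref{lemimp11}: for $z\in T_j$ and $g\in\mathcal{G}$ one has $\theta_{g(z)}=\theta_z g^{-1}$, so the $\sim$-class of $g(z)$ is determined by that of $z$ alone; hence $g$ carries each block onto a single block, which is exactly the block condition. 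Finally I would check that the map $\theta$ prescribed by \cref{cosimpo}, once a block $T$ is fixed, reproduces the original $\theta_x$: choosing $T$ to be the class containing the identity-representatives supplied by \cref{lemimp1}, \cref{lemimp11} forces $\theta_{g(s)}=g^{-1}$ for $s\in T$, matching the construction verbatim.

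The main obstacle I anticipate is purely bookkeeping rather than conceptual: one must confirm that the distinguished block $T$ can be chosen coherently so that the identity elements $s_i$ (one per orbit, each with $\theta_{s_i}=\id_S$, furnished by \cref{lemimp1}) all lie in a single $\sim$-class, and that this class is genuinely a block. This amounts to observing that all elements with $\theta=\id_S$ are $\sim$-equivalent by definition of $\sim$, and that \cref{lemgr} places exactly one such element in each orbit; together these say the identity-class $T$ is a transversal-compatible block meeting each orbit once. Once this alignment is pinned down, the reconstructed $s(x,y)=(x\cdotp y,\theta_x(y))$ coincides with the given solution on the nose, so the corollary follows from \cref{cosimpo}.
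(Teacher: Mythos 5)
Your proof is correct and follows essentially the same route as the paper's own argument: take $\mathcal{G}:=\mathcal{G}(S,s)$ (semi-regular by \cref{teo1}, inside $\Aut(S,\cdotp)$ by \cref{eq2} and commutativity), take $\sim$ to be the retract relation, and verify the two compatibility hypotheses via \cref{lemgr} and \cref{lemimp11}. The only difference is that you spell out the final bookkeeping --- that the block $T$ of elements with $\theta_x=\id_S$ reproduces the original map $\theta$ --- which the paper leaves implicit.
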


\begin{proof}
    Let $(S,s)$ be a commutative non-degenerate solution on a semigroup $(S,\cdotp)$. By commutativity of $(S,s)$ it follows that the semigroup $(S,\cdotp)$ is weak commutative. By \cref{eq2} and by commutativity of $(S,s)$, it follows that $\mathcal{G}(S,s)$ is a subgroup of $Aut(S,\cdotp)$ and by \cref{teo1} it acts semi-regularly on $S$. If we take $\sim_r$ as the rectract relation and $\mathcal{T}:=\{T_1,...,T_m\}$ the partition induced by $\sim_r$, by \cref{lemgr} we have that every orbit $S_i$ is a set of representative of $S/\sim_r$ and by \cref{lemimp11} $\mathcal{T}$ is a system of blocks for $\mathcal{G}(S,s)$. Therefore, using the same notation of \cref{cosimpo}, the solution $(S,s)$ can be constructed taking the semigroup $(S,\cdotp )$ and setting $\mathcal{G}:=\mathcal{G}(S,s)$ and $\sim:=\sim_r$.
\end{proof}

\begin{cor}
    Every commutative non-degenerate solution on a left-zero semigroup can be constructed by \cref{coroperir}.
\end{cor}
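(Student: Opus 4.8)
The plan is to mirror the argument of \cref{invsol}, specialising the data to the left-zero setting so that the simplified construction of \cref{coroperir} applies directly. Let $(S,s)$ be a commutative non-degenerate solution on a left-zero semigroup $(S,\cdotp)$. Since $x\cdotp y=x$ for all $x,y\in S$, the map $s$ has the form $s(x,y)=(x,\theta_x(y))$, which already matches the shape of the solutions produced by \cref{coroperir}; thus the content of the statement is to recover the maps $\theta_x$ from orbit data.

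First I would set $\mathcal{G}:=\mathcal{G}(S,s)$, which is by definition a subgroup of $Sym(S)$, and invoke \cref{teo1} to conclude that its action on $S$ is semi-regular. Let $S_1,\ldots,S_r$ be the orbits of this action. By \cref{lemimp1}, in each orbit $S_i$ there is an element $s_i$ with $\theta_{s_i}=id_S$, and I would fix one such representative for every $i$.

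The key step is to check that \cref{lemimp11} forces the maps of $(S,s)$ to coincide with those prescribed in \cref{coroperir}. Given an arbitrary $x_i\in S_i$, semi-regularity provides a unique $g\in\mathcal{G}$ with $g(s_i)=x_i$, and \cref{lemimp11} then yields $\theta_{x_i}=\theta_{g(s_i)}=\theta_{s_i}g^{-1}=g^{-1}$. Hence the family of maps attached to $(S,s)$ is precisely the one built in \cref{coroperir} from the semi-regular group $\mathcal{G}(S,s)$, the orbits $S_1,\ldots,S_r$, and the representatives $s_i$; applying \cref{coroperir} to these data reproduces $(S,s)$ itself.

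The only points requiring care are well-definedness and uniqueness. One must confirm that the $g$ with $g(s_i)=x_i$ is unique, so that $\theta_{x_i}$ is unambiguously $g^{-1}$; this is exactly where the semi-regularity from \cref{teo1} enters. One must also know that a representative $s_i$ with $\theta_{s_i}=id_S$ exists in every orbit, which is \cref{lemimp1}. I expect the reconstruction identity $\theta_{x_i}=g^{-1}$ to be the crux of the argument, while the remaining verifications amount to a direct specialisation of \cref{invsol} in which the congruence $\sim$ collapses because the underlying semigroup is already left-zero.
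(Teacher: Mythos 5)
Your proof is correct and is essentially the paper's own argument: the paper's proof just says to repeat the proof of \cref{invsol} with \cref{coroperir} in place of \cref{cosimpo}, which amounts exactly to your steps --- semi-regularity of $\mathcal{G}(S,s)$ from \cref{teo1}, orbit representatives with $\theta_{s_i}=id_S$ from \cref{lemimp1}, and the reconstruction identity $\theta_{g(s_i)}=g^{-1}$ from \cref{lemimp11}. The only caveat, which your argument shares with the paper's, is that \cref{teo1} and \cref{lemimp11} are stated for \emph{finite} solutions, so the argument implicitly assumes finiteness even though the corollary is stated without it.
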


\begin{proof}
 Using \cref{coroperir} instead of \cref{cosimpo}, the proof is similar to the one of the previous corollary.
\end{proof}

In \cite[Lemma 5.2]{colazzo2020set} Colazzo, Jespers and Kubat showed that all retract classes of an involutive solution have the same cardinality. In the next result, which closes the section, we show that this fact indeed holds for an arbitrary finite commutative non-degenerate solution. Moreover, we show that the associated permutation group allows to determine the size of every class.

\begin{prop}\label{cardretr}
     Let $(S,s)$ be a finite commutative non-degenerate solution. Then, $|\Ret(S,s)|=|\mathcal{G}(S,s)|$ and all the equivalence classes of $S$ respect to the retract relation have cardinality $\frac{|S|}{|\mathcal{G}(S,s)|}$.
\end{prop}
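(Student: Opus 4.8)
The plan is to combine the structural results already established in this section, especially \cref{lemgr} and \cref{teo1}, which together pin down exactly how the retract classes and the orbits interact. The two claims of the proposition—that $|\Ret(S,s)|=|\mathcal{G}(S,s)|$ and that every retract class has size $\frac{|S|}{|\mathcal{G}(S,s)|}$—should both fall out of counting arguments once the right bijections are identified.

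First I would establish $|\Ret(S,s)|=|\mathcal{G}(S,s)|$. By definition, the retract relation identifies $x$ and $y$ precisely when $\theta_x=\theta_y$, so the elements of $\Ret(S,s)$ are in bijection with the set $\{\theta_x : x\in S\}$ of distinct maps occurring as some $\theta_x$. Thus it suffices to show this set equals all of $\mathcal{G}(S,s)$. Fix any orbit $Z$; by \cref{lemgr}, for every $g\in\mathcal{G}(S,s)$ there is a unique $x_z\in Z$ with $\theta_{x_z}=g$. The existence part already shows every element of $\mathcal{G}(S,s)$ is realised as some $\theta_x$, so $\{\theta_x:x\in S\}=\mathcal{G}(S,s)$ and hence $|\Ret(S,s)|=|\mathcal{G}(S,s)|$.

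Next I would compute the size of a retract class. A retract class is a set of the form $C_g:=\{x\in S:\theta_x=g\}$ for some $g\in\mathcal{G}(S,s)$. The key observation from \cref{lemgr} is that $C_g$ meets each orbit in exactly one point: the uniqueness part gives at most one such $x$ per orbit, and the existence part gives at least one. Therefore $|C_g|$ equals the number of orbits. Since the action is semi-regular by \cref{teo1}, every orbit has cardinality exactly $|\mathcal{G}(S,s)|$, so the number of orbits is $\frac{|S|}{|\mathcal{G}(S,s)|}$, which is precisely the claimed common cardinality of each retract class.

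I do not expect a serious obstacle here, since both \cref{lemgr} and the semi-regularity theorem do the heavy lifting; the only point requiring a little care is making explicit that a retract class, viewed through the lens of \cref{lemgr}, is exactly a transversal of the orbit partition selecting the unique representative with a prescribed value of $\theta$. Once this identification between retract classes and orbit-transversals is stated cleanly, the two cardinality formulas are immediate consequences of semi-regularity (orbits all have size $|\mathcal{G}(S,s)|$) and of the bijection between retract classes and group elements.
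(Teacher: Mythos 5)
Your proof is correct and follows essentially the same route as the paper: both counts come from \cref{lemgr} (every $g\in\mathcal{G}(S,s)$ occurs exactly once as a $\theta_x$ in each orbit), and semi-regularity from \cref{teo1} converts the number of orbits into $\frac{|S|}{|\mathcal{G}(S,s)|}$. Your write-up merely makes explicit the bijections that the paper's terser proof leaves implicit.
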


\begin{proof}
    By \cref{lemgr}, we have that $|\Ret(S,s)|$ is equal to $|\mathcal{G}(S,s)| $. By the same lemma, the cardinality of every retract class is equal to the number of orbits of the action of $\mathcal{G}(S,s)$. Since this action is semi-regular, the thesis follows.
\end{proof}

\section{Irretractable solutions}

In this section, we focus on irretractable solutions. In particular, we characterise these solutions by the associated permutation group and we provide a simple method to construct them and to cut the isomorphism classes.

\vspace{2mm}

At first, we extend the result obtained in \cite[Proposition $4.2$]{colazzo2020set} on the retraction-process of involutive solutions to commutative non-degenerate solutions.

\begin{prop}\label{retrirr}
    Let $(S,s)$ be a commutative non-degenerate solution. Then, $\Ret(S,s)$ is an irretractable solution.
\end{prop}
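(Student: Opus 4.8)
The plan is to show directly that the retraction relation on the quotient $\bar{S}:=S/\sim$ is trivial, i.e.\ that two distinct $\sim$-classes induce distinct maps $\bar{\theta}$. Since $\Ret(S,s)$ is itself a commutative non-degenerate solution (established earlier in the section), this is precisely the statement that $\Ret(S,s)$ coincides with its own retraction, hence that it is irretractable.

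First I would unwind what it means for two classes to be related by the retraction of $\Ret(S,s)$. Suppose $\bar{x}$ and $\bar{x}'$ are classes with $\bar{\theta}_{\bar{x}}=\bar{\theta}_{\bar{x}'}$ as permutations of $\bar{S}$. By the definition $\bar{\theta}_{\bar{x}}(\bar{y})=\overline{\theta_x(y)}$, this equality means $\overline{\theta_x(y)}=\overline{\theta_{x'}(y)}$ for every $y\in S$, which by the very definition of the retraction relation on $S$ unpacks to $\theta_{\theta_x(y)}=\theta_{\theta_{x'}(y)}$ for all $y\in S$.

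The key step is to feed this into the identity $\theta_{\theta_x(y)}=\theta_y\theta_x^{-1}$, valid for every commutative non-degenerate solution: it follows from \cref{eq3} together with the commutativity equality $\theta_{x\cdotp y}=\theta_x$, and uses only that each $\theta_x$ is bijective, so no finiteness is required. Applying it on both sides gives $\theta_y\theta_x^{-1}=\theta_y\theta_{x'}^{-1}$ for all $y\in S$; cancelling $\theta_y$ for any fixed $y$ yields $\theta_x=\theta_{x'}$, that is $x\sim x'$ and hence $\bar{x}=\bar{x}'$. Thus distinct classes of $\Ret(S,s)$ carry distinct maps $\bar{\theta}$, so its retraction relation is the identity and $\Ret(S,s)=\Ret(\Ret(S,s))$.

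I do not expect a genuine obstacle here; the only point requiring care is conceptual rather than computational. One must remember that irretractability of $\Ret(S,s)$ has to be tested with the maps $\bar{\theta}$ acting on the smaller set $\bar{S}$, not with the original maps on $S$; the argument then hinges on observing that the defining congruence lets one transport the equality $\bar{\theta}_{\bar{x}}=\bar{\theta}_{\bar{x}'}$ back to an equality of the $\theta_{\theta_x(y)}$ inside $S$, where the structural identity closes everything in a single cancellation.
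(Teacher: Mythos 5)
Your proposal is correct and follows essentially the same route as the paper: unwind $\bar{\theta}_{\bar{x}}=\bar{\theta}_{\bar{x}'}$ to $\theta_{\theta_x(y)}=\theta_{\theta_{x'}(y)}$ for all $y$, rewrite both sides via \cref{eq3} together with $\theta_{x\cdotp y}=\theta_x$ (commutativity) as $\theta_y\theta_x^{-1}=\theta_y\theta_{x'}^{-1}$, and cancel to conclude $\bar{x}=\bar{x}'$. The paper presents this as a chain of equivalences, but the key identity and the cancellation are identical, and like you it needs no finiteness assumption.
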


\begin{proof}
    Let $x,y\in S$ such that $\bar{\theta}_{\bar{x}}=\bar{\theta}_{\bar{y}}$. Then
    \begin{eqnarray}
        \bar{\theta}_{\bar{x}}=\bar{\theta}_{\bar{y}} &\Longleftrightarrow&\theta_{\theta_x(z)}=\theta_{\theta_y(z)} \quad \forall z\in S \nonumber \\
         &\underset{\cref{eq3}}{\Longleftrightarrow}& \theta_{z}\theta_{x\cdotp z}^{-1}=\theta_{z}\theta_{y\cdotp z}^{-1} \quad \forall z\in S \nonumber \\
         &\underset{(S,s) \hspace{2mm} commutative}{\Longleftrightarrow}&
         \theta_{z}\theta_{x}^{-1}=\theta_{z}\theta_{y}^{-1} \quad \forall z\in S \nonumber \\
         &\Longleftrightarrow& \theta_x=\theta_y \nonumber \\
         &\Longleftrightarrow&\bar{x}=\bar{y} \nonumber
    \end{eqnarray}
Therefore the thesis follows.
\end{proof}

The next two results show that retractability of a solution $(S,s)$ can be detected by studying the action of $\mathcal{G}(S,s)$. 

\begin{theor}\label{carattirr}
     Let $(S,s)$ be a finite commutative non-degenerate solution. Then, $(S,s)$ is irretractable if and only if $\mathcal{G}(S,s)$ acts regularly on $S$.
\end{theor}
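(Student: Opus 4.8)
The plan is to prove both implications directly, leveraging the fact from \cref{teo1} that the action of $\mathcal{G}(S,s)$ on $S$ is always semi-regular. Since a permutation group acts regularly precisely when it acts both semi-regularly and transitively, \cref{teo1} reduces the problem to controlling transitivity, i.e. the number of orbits. I would also recall at the outset that, by the definition of the retraction, $(S,s)$ is irretractable exactly when the relation $\sim$ is trivial, that is, when $\theta_x=\theta_y$ forces $x=y$.

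For the implication ``regular $\Rightarrow$ irretractable'', I would assume $\mathcal{G}(S,s)$ acts regularly, hence transitively. Given $x,y\in S$ with $\theta_x=\theta_y$, transitivity yields some $g\in\mathcal{G}(S,s)$ with $g(x)=y$. Applying \cref{lemimp11} gives $\theta_y=\theta_{g(x)}=\theta_x g^{-1}$, so that $\theta_x=\theta_x g^{-1}$; cancelling the bijection $\theta_x$ forces $g=\id_S$ and therefore $x=y$. Thus $\sim$ is trivial and the solution is irretractable.

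For the converse ``irretractable $\Rightarrow$ regular'', I would again use \cref{teo1} to reduce to proving transitivity, equivalently that $S$ consists of a single orbit. The key ingredient is \cref{lemimp1}: every orbit contains an element $z$ with $\theta_z=\id_S$. Were there two distinct orbits, this would produce two distinct elements $z_1\neq z_2$ with $\theta_{z_1}=\id_S=\theta_{z_2}$, contradicting irretractability. Hence there is exactly one orbit, the action is transitive, and combined with semi-regularity it is regular.

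The main obstacle---really the crux of the argument---is the forward direction, and specifically the observation that \cref{lemimp1} forces a distinct identity-$\theta$ element in each orbit; once this is in hand, irretractability immediately collapses the orbit count to one. An alternative route would bypass the orbit argument entirely by invoking \cref{cardretr}: irretractability is equivalent to all retract classes being singletons, hence to $|S|=|\mathcal{G}(S,s)|$, while semi-regularity makes every orbit have size $|\mathcal{G}(S,s)|$, so regularity (a single orbit) is likewise equivalent to $|S|=|\mathcal{G}(S,s)|$; the two conditions then coincide.
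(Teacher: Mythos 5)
Your proof is correct and follows essentially the same route as the paper's: the direction ``regular $\Rightarrow$ irretractable'' via transitivity plus \cref{lemimp11}, and the direction ``irretractable $\Rightarrow$ regular'' by using \cref{teo1} to reduce regularity to transitivity and then deriving a contradiction from \cref{lemimp1} applied to two distinct orbits. The counting alternative you sketch via \cref{cardretr} is also valid, but it is not needed; the paper's argument is the one you gave first.
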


\begin{proof}
    Suppose that $(S,s)$ is irretractable. If $\mathcal{G}(S,s)$ does not act regularly on $S$, let $S_1$ and $S_2$ be distinct orbits of $S$. Then, by \cref{lemimp1} there exist $x_1\in S_1$ and $x_2\in S_2$ such that $\theta_{x_1}=\theta_{x_2}=id_S$, a contradiction.\\
    Conversely, suppose that $\mathcal{G}(S,s)$ acts regularly on $S$. If $\theta_x=\theta_y$, let $g\in \mathcal{G}(S,s)$ such that $g(x)=y$. Then, by \cref{lemimp11} we have that $\theta_y=\theta_{g(x)}=\theta_x g^{-1}$ hence $g^{-1}=id_S$ and the thesis follows.
\end{proof}

%\begin{prop}
%     Let $(S,s)$ be a commutative non-degenerate solution and $Z$ be an arbitrary orbit. Then, $Ret(S,s)$ is isomorphic to the solution induced by $Z$. In particular, the size of $Ret(S,s)$ divides the one of $S$ and is equal to the size of $\mathcal{G}(S,s)$.
%\end{prop}

%\begin{proof}

 %   Let $\phi$ be the map from $(Z,s_{Z\times Z})$ to $Ret(S,s)$ given by $\phi(z):=\overline{z}$ for all $z\in Z$. 
%\end{proof}

Now, we present a construction of irretractable commutative non-degenerate solutions. An arbitrary (finite or infinite) group is all we need. Moreover, we show that by this construction isomorphic solutions arise from isomorphic group.

\begin{prop}\label{esfond}
     Let $G$ be a group and $s_G$ be the map from $G$ to itself given by $s_G(g,h):=(g,g^{-1}h)$ for all $g,h\in G$. Then, $(G,s_G)$ is a bijective irretractable commutative non-degenerate solution and $\mathcal{G}(G,s_G)\cong G$.\\ Moreover, two groups $G$ and $G'$ are isomorphic if and only if $(G,s_G)$ and $(G',s_{G'})$ are isomorphic.
\end{prop}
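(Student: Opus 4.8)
The plan is to verify the claim in three parts. First I would check that $(G,s_G)$ is a solution on the left-zero semigroup structure implicit in the formula. Writing $s_G(g,h)=(g\cdotp h,\theta_g(h))$ forces $g\cdotp h=g$, so $(G,\cdotp)$ is the left-zero semigroup on the underlying set of $G$, and $\theta_g(h)=g^{-1}h$ is left-translation by $g^{-1}$, which is a bijection of $G$. By \cref{primies}-style reasoning it suffices to verify \cref{eq1,eq2,eq3}: \cref{eq1} is immediate since the semigroup is left-zero; for \cref{eq2} one computes $\theta_g(h)\cdotp\theta_{g\cdotp h}(k)=\theta_g(h)=g^{-1}h$ and $\theta_g(h\cdotp k)=\theta_g(h)=g^{-1}h$, so both sides agree; for \cref{eq3} one has $\theta_{\theta_g(h)}\theta_{g\cdotp h}=\theta_{g^{-1}h}\theta_g$, which sends $k\mapsto (g^{-1}h)^{-1}(g^{-1}k)=h^{-1}gg^{-1}k=h^{-1}k=\theta_h(k)$, confirming \cref{eq3}. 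Bijectivity of $s_G$ follows since each $\theta_g$ is bijective and the first coordinate is retained. Commutativity and non-degeneracy then follow either directly from the definition or from the earlier proposition on left-zero semigroups.

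Next I would compute the associated permutation group and establish irretractability. Each $\theta_g$ is left-translation by $g^{-1}$, so the map $g\mapsto\theta_g$ identifies $G$ with the image $\{\theta_g : g\in G\}$ inside $\Sym(G)$, which is precisely the left-regular representation (up to the inversion $g\mapsto g^{-1}$, which is a bijection of $G$). Hence $\mathcal{G}(G,s_G)$ is the group of all left-translations, abstractly isomorphic to $G$, and moreover it acts regularly on $G$ since the left-regular action is regular. By \cref{carattirr}, regularity of the action is equivalent to irretractability, so $(G,s_G)$ is irretractable. (Alternatively one checks directly that $\theta_g=\theta_{g'}$ forces $g=g'$, so each retract class is a singleton.)

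For the final biconditional, the forward direction is the substantive one. If $G\cong G'$ via a group isomorphism $\phi$, I would verify that $\phi$ is also a solution isomorphism: one needs $(\phi\times\phi)s_G=s_{G'}(\phi\times\phi)$, and since $\phi(g^{-1}h)=\phi(g)^{-1}\phi(h)$ by multiplicativity, both sides send $(g,h)$ to $(\phi(g),\phi(g)^{-1}\phi(h))$, so $\phi$ works. Conversely, if $(G,s_G)\cong(G',s_{G'})$ via a bijection $\phi$, then by the remark earlier in the paper an isomorphism of solutions is in particular a semigroup isomorphism of the underlying semigroups; but here both underlying semigroups are left-zero, so that gives no group information directly. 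The real content is that $\phi$ must intertwine the $\theta$-maps, hence induce an isomorphism of the associated permutation groups, and since $\mathcal{G}(G,s_G)\cong G$ and $\mathcal{G}(G',s_{G'})\cong G'$ by the first part, we conclude $G\cong G'$.

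The main obstacle I anticipate is the converse direction of the biconditional: extracting a group isomorphism $G\cong G'$ from a mere solution isomorphism $\phi$. The subtlety is that $\phi$ respecting $s$ gives $(\phi\times\phi)s_G=s_{G'}(\phi\times\phi)$, and comparing second coordinates yields $\phi(g^{-1}h)=\phi(g)^{-1}\phi(h)$ for all $g,h$; setting $g$ to be the identity of $G$ (which need not map to the identity of $G'$) and manipulating carefully should produce a homomorphism after composing with a suitable translation. I expect the cleanest route is to first normalize $\phi$ so that it fixes identities, then read off multiplicativity from the second-coordinate identity, rather than routing through the associated permutation group abstractly.
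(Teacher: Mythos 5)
Your proposal is correct, and on the first part (solution axioms, bijectivity, non-degeneracy, identification of $\mathcal{G}(G,s_G)$ with the group of left translations) it matches the paper's proof, which verifies \cref{eq3} by the same computation $(g^{-1}h)^{-1}g^{-1}i=h^{-1}i$ and establishes irretractability exactly by your parenthetical remark that $\theta_g=\theta_h$ forces $g=h$. Two points of difference are worth flagging. First, your primary route to irretractability through \cref{carattirr} is not available in full generality: that theorem is stated, and proved via \cref{lemimp11}, only for \emph{finite} solutions, whereas the paper emphasizes that \cref{esfond} is meant for an arbitrary (finite or infinite) group; so the direct check you relegate to a parenthesis is in fact the necessary argument, and it is the one the paper uses. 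Second, for the converse of the biconditional the paper argues by hand: from $\phi(g^{-1}h)=\phi(g)^{-1}\phi(h)$ it sets $g=1$ to get $\phi(1)=1$ (the normalization you worry about is automatic, since this identity together with cancellation forces $\phi(1)=1$; no composition with a translation is needed), then sets $h=1$ to get $\phi(g^{-1})=\phi(g)^{-1}$, and then deduces $\phi(gh)=\phi(g)\phi(h)$. Your alternative route is genuinely different and also valid: a solution isomorphism satisfies $\phi\theta_g\phi^{-1}=\theta'_{\phi(g)}$, so conjugation by $\phi$ carries the generators of $\mathcal{G}(G,s_G)$ onto those of $\mathcal{G}(G',s_{G'})$ and hence induces an isomorphism of these groups, which by the first part are $G$ and $G'$; this needs only the one-line intertwining computation and works without any finiteness assumption. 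The paper's computation buys a self-contained elementary argument; yours buys generality, since the intertwining fact is a statement about arbitrary solution isomorphisms rather than something special to $s_G$.
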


\begin{proof}
    By a standard calculation, one can show that the map $s_G$ is bijective. To show that $(G,s_G)$ is a commutative solution we only have to prove that \cref{eq3} follows. Indeed, if $g,h,i$ are elements of  $G$, then 
    $$\theta_{\theta_g(h)}\theta_{g}(i)=(g^{-1}h)^{-1}g^{-1}i=h^{-1}i=\theta_h(i) $$
    and hence $(G,s_G)$ is a solution. Since the left multiplication by an element $g$ of $G$ is bijective, we obtain that $(G,s_G)$ is non-degenerate. Finally 
    $$ \theta_g=\theta_h \Longleftrightarrow g^{-1}z=h^{-1}z \quad \forall z\in G \Longleftrightarrow g=h $$
    hence $(G,s_G)$ is irretractable. Clearly, the subgroup generated by the maps $\theta_x$ is the one generated by the left multiplications $t_g:G\longrightarrow G$, $h\mapsto gh$, therefore $\mathcal{G}(G,s_G)\cong G$.\\
    If $G$ and $G'$ are isomorphic then $(G,s_G)$ and $(G',s_{G'})$ are clearly isomorphic. 
    Conversely, if we suppose that $G$ and $G'$ are groups such that $(G,s_G)$ and $(G',s_{G'})$ are isomorphic, let $\phi$ be a bijective map from $G$ to $G'$ such that $(\phi \times \phi) s_G= s_{G'} (\phi \times \phi ) $. This implies that $\phi(g^{-1} h)=\phi(g)^{-1}\phi(h)$ for all $g,h\in G$. Therefore if we set $g=1$ we obtain $\phi(h)=\phi(1)^{-1}\phi(h)$ for all $h\in G$ and hence $\phi(1)=1$, moreover if we set $h=1$ we obtain $\phi(g^{-1})=\phi(g)^{-1}$ for all $g\in G$ and hence    $$\phi(gh)=\phi((g^{-1})^{-1}h)=\phi(g^{-1})^{-1}\phi(h)=\phi((g^{-1})^{-1})\phi(h)=\phi(g)\phi(h)$$
    for all $g,h\in G$, hence the thesis follows.
\end{proof}

\noindent In the following results we show that all the finite irretractable solutions arise from the construction of the previous proposition.

\begin{theor}\label{classif}
    Let $(S,s)$ be an irretractable finite commutative non-degenerate solution. Then, $  (S,s)\cong (\mathcal{G}(S,s),s_{\mathcal{G}(S,s)})$.
\end{theor}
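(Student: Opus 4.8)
The plan is to construct an explicit isomorphism from $(S,s)$ to $(\mathcal{G}(S,s), s_{\mathcal{G}(S,s)})$. Since $(S,s)$ is irretractable, by \cref{carattirr} the group $G:=\mathcal{G}(S,s)$ acts regularly on $S$. Regularity is exactly what is needed to identify $S$ with $G$: first I would fix a base-point. By \cref{lemimp1} applied to the unique orbit (the action is transitive, being regular), there exists $e\in S$ with $\theta_e=\id_S$. I then define $\phi:S\longrightarrow G$ by sending each $x\in S$ to the unique $g\in G$ with $g(e)=x$; this $\phi$ is a well-defined bijection precisely because the action is regular.

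The heart of the argument is to check that $\phi$ intertwines the two solution maps, i.e. that $(\phi\times\phi)s = s_{\mathcal{G}(S,s)}(\phi\times\phi)$. Writing $s(x,y)=(x\cdotp y,\theta_x(y))$ and $s_G(g,h)=(g,g^{-1}h)$, and recalling from \cref{esfond} that the maps $\theta_g$ of $s_G$ are exactly the left multiplications, this amounts to two verifications for $x=\phi^{-1}(g)$, $y=\phi^{-1}(h)$: that $\phi(x\cdotp y)=g$ and that $\phi(\theta_x(y))=g^{-1}h$. The key tool here is \cref{lemimp11}, which I would use to pin down $\theta_x$ in terms of $\phi(x)$: if $x=g(e)$ then $\theta_x=\theta_{g(e)}=\theta_e g^{-1}=g^{-1}$, so under the identification $\phi$ the map $\theta_x$ becomes left multiplication by $g^{-1}$, matching the $s_G$ side. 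This immediately gives $\phi(\theta_x(y))=g^{-1}h$ once one checks that $\phi$ is equivariant in the sense $\phi(k(y))=k\,\phi(y)$ for $k\in G$, which follows directly from the definition of $\phi$ and regularity.

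For the first component I must show $\phi(x\cdotp y)=g$, equivalently $x\cdotp y=g(e)=x$; that is, the underlying semigroup is a left-zero semigroup. I would deduce this from irretractability: since $\theta_x=g^{-1}$ depends bijectively on $x$ (distinct elements of the single orbit have distinct $\theta$-maps by regularity and \cref{lemimp11}), and since $\theta_{x\cdotp y}=\theta_x$ holds in any commutative solution, the congruence class of $x\cdotp y$ under the retract relation equals that of $x$; but in an irretractable solution the retract relation is trivial, so $x\cdotp y=x$. This is the step I expect to require the most care, since it is where commutativity, non-degeneracy and irretractability must be combined correctly to force the left-zero structure.

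Having established that $\phi$ is a bijection satisfying $(\phi\times\phi)s=s_G(\phi\times\phi)$, the definition of isomorphism of solutions yields $(S,s)\cong(\mathcal{G}(S,s),s_{\mathcal{G}(S,s)})$, completing the proof. The main obstacle is not the equivariance bookkeeping but rather the clean extraction of the left-zero property from irretractability; everything else is a direct consequence of the regular action together with \cref{lemimp11} and \cref{esfond}.
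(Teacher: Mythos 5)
Your proof is correct and is essentially the paper's proof: both identify $S$ with the permutation group through the regular action (guaranteed by \cref{carattirr}) based at a point $e$ where $\theta_e=\id_S$, and both hinge on \cref{lemimp11} to compute $\theta_{g(e)}=g^{-1}$. The two minor differences both favour your version: your bijection $g(e)\mapsto g$ lands directly in $\mathcal{G}(S,s)$, whereas the paper's choice $\phi(g(z)):=g^{-1}$ forces a detour through the opposite group $\mathcal{G}'$ and a final appeal to \cref{esfond} to return to $\mathcal{G}(S,s)$; and you explicitly derive the left-zero property $x\cdotp y=x$ from irretractability (via $\theta_{x\cdotp y}=\theta_x$ and triviality of the retract relation), a fact the paper uses tacitly when it writes $s(g(z),h(z))=(g(z),\theta_{g(z)}(h(z)))$ in its computation.
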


\begin{proof}
    Let $ (\mathcal{G}',\circ)$ be the opposite group of $\mathcal{G}(S,s)$. Let $z$ be the element of $S$ such that $\theta_z=id_S$ and $\phi$ be the function from $S$ to $\mathcal{G}'$ given by $\phi(z)=id_S$ and $\phi(g(z)):=g^{-1}$ for all $g\in \mathcal{G}'$. Then
    \begin{eqnarray}
        (\phi\times \phi) s(g(z),h(z))&=&(\phi\times \phi) (g(z),\theta_{g(z)}(h(z))) \nonumber \\
        &\underset{lemma}{=}& (\phi\times \phi) (g(z),g^{-1}(h(z))) \nonumber \\ 
        &=& (g^{-1},(g^{-1}h)^{-1}) \nonumber \\
        &=& (g^{-1},h^{-1}(g^{-1})^{-1}) \nonumber \\
        &=& (g^{-1},(g^{-1})^{-1}\circ  h^{-1}) \nonumber \\
        &=& s_{\mathcal{G}'}(g^{-1},h^{-1}) \nonumber \\
        &=& s_{\mathcal{G}'} (\phi \times \phi)(g(z),h(z)) \nonumber 
    \end{eqnarray}
    for all $g\in \mathcal{G}'$, then $(S,s)\cong (\mathcal{G}',s_{\mathcal{G}'})$. Since by \cref{esfond} $(\mathcal{G}',s_{\mathcal{G}'})\cong (\mathcal{G}(S,s),s_{\mathcal{G}(S,s)})$, the thesis follows.
\end{proof}

%\begin{theor}
%    Let $(S,s)$ be an irretractable finite commutative non-degenerate solution with abelian permutation group $\mathcal{G}(S,s)$ . Then, $  (S,s)\cong (\mathcal{G}(S,s),s_{\mathcal{G}(S,s)})$.
%\end{theor}

%\begin{proof}
%    Let $z$ be the element of $S$ such that $\theta_z=id_S$ and $\phi$ be the function from $S$ to $\mathcal{G}(S,s)$ given by $\phi(z)=id_S$ and $\phi(g(z)):=g^{-1}$ for all $g\in \mathcal{G}(S,s)$. Then
%    \begin{eqnarray}
%        (\phi\times \phi) s(g(z),h(z))&=&(\phi\times \phi) (g(z),\theta_{g(z)}(h(z))) \nonumber \\
%        &\underset{lemma}{=}& (\phi\times \phi) (g(z),g^{-1}(h(z))) \nonumber \\ 
%        &=& (g^{-1},(g^{-1}h)^{-1}) \nonumber \\
%        &=& (g^{-1},(g^{-1})^{-1} h^{-1}) \nonumber \\
%        &=& s_{\mathcal{G}(S,s)}(g^{-1},h^{-1}) \nonumber \\
%        &=& s_{\mathcal{G}(S,s)} (\phi \times \phi)(g(z),h(z)) \nonumber 
%    \end{eqnarray}
%    for all $g\in \mathcal{G}(S,s)$, then $(S,s)\cong (\mathcal{G}(S,s),s_{\mathcal{G}(S,s)})$.
%\end{proof}

\begin{cor}\label{coroclass}
    Let $\mathcal{G}$ be a finite group. Then, $(\mathcal{G},s_{\mathcal{G}})$ is the unique irretractable finite commutative non-degenerate solution with permutation group isomorphic to $\mathcal{G}$.
\end{cor}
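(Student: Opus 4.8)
The plan is to prove the corollary as a direct consequence of the three results already established: the construction in \cref{esfond}, the classification in \cref{classif}, and the isomorphism-detection statement (the second part of \cref{esfond}). The corollary makes two assertions for a fixed finite group $\mathcal{G}$: that $(\mathcal{G},s_{\mathcal{G}})$ \emph{is} an irretractable finite commutative non-degenerate solution with associated permutation group isomorphic to $\mathcal{G}$, and that it is the \emph{unique} such solution (up to isomorphism). The first assertion is immediate, so the work is concentrated in the uniqueness claim.

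First I would record existence: by \cref{esfond}, the pair $(\mathcal{G},s_{\mathcal{G}})$ is a bijective irretractable commutative non-degenerate solution, and since $\mathcal{G}$ is finite the solution is finite; moreover the same proposition gives $\mathcal{G}(\mathcal{G},s_{\mathcal{G}})\cong \mathcal{G}$. Hence $(\mathcal{G},s_{\mathcal{G}})$ indeed belongs to the class of irretractable finite commutative non-degenerate solutions whose associated permutation group is isomorphic to $\mathcal{G}$, so this class is nonempty.

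For uniqueness, let $(S,s)$ be any irretractable finite commutative non-degenerate solution with $\mathcal{G}(S,s)\cong \mathcal{G}$. By \cref{classif} we have $(S,s)\cong(\mathcal{G}(S,s),s_{\mathcal{G}(S,s)})$. The remaining step is to identify $(\mathcal{G}(S,s),s_{\mathcal{G}(S,s)})$ with $(\mathcal{G},s_{\mathcal{G}})$: since $\mathcal{G}(S,s)\cong\mathcal{G}$ as groups, the second part of \cref{esfond} (which asserts that $(G,s_G)\cong(G',s_{G'})$ whenever $G\cong G'$) yields $(\mathcal{G}(S,s),s_{\mathcal{G}(S,s)})\cong(\mathcal{G},s_{\mathcal{G}})$. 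Composing the two isomorphisms gives $(S,s)\cong(\mathcal{G},s_{\mathcal{G}})$, establishing uniqueness up to isomorphism.

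I do not anticipate a genuine obstacle here, as every ingredient is already in place; the only point demanding a little care is making explicit that ``unique'' is meant up to isomorphism of solutions, and that the bridge from the abstract group isomorphism $\mathcal{G}(S,s)\cong\mathcal{G}$ to an isomorphism of the associated solutions is exactly the content of the ``if and only if'' in \cref{esfond}. In effect this corollary simply packages \cref{classif} and \cref{esfond} into a clean classification statement indexed by the isomorphism type of the finite group $\mathcal{G}$, so the proof is short and essentially a citation of those two results.
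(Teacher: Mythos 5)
Your proposal is correct and follows essentially the same route as the paper, whose proof is simply a citation of \cref{classif}; you merely make explicit the existence part and the bridge (via the ``if and only if'' in \cref{esfond}) from the group isomorphism $\mathcal{G}(S,s)\cong\mathcal{G}$ to an isomorphism of solutions, which the paper leaves implicit. No gap.
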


\begin{proof}
    It follows by \cref{classif}.
\end{proof}

\noindent We close the section recovering \cite[Proposition 4.5]{colazzo2020set} in the finite case.

\begin{cor}
    If the solution $(G,s_G)$ constructed in \cref{esfond} is an elementary abelian $2$-group, then $s_G$ is an involutive solution.\\
    Conversely, every finite irretractable involutive solution $(S,s)$ can be constructed by \cref{esfond}, using an elementary abelian $2$-group. Moreover, if $(S,s)$ and $(S',s')$ are two finite irretractable involutive solutions, then $(S,s)\cong (S',s')$ if and only if $|S|=|S'|$.
\end{cor}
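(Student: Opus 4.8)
The plan is to reduce everything to the classification theorem \cref{classif} together with a single explicit computation of $s_G^2$. For the first assertion I would simply compute the square of $s_G$. Since $s_G(g,h)=(g,g^{-1}h)$, one gets $s_G^2(g,h)=s_G(g,g^{-1}h)=(g,g^{-2}h)$. When $G$ is an elementary abelian $2$-group every element satisfies $g^2=e$, so $g^{-2}=e$ and $s_G^2=\id_{G\times G}$; hence $(G,s_G)$ is involutive. This is the routine direction and presents no difficulty.

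For the converse I would first recall that, by the Proposition cited in \cref{primies}'s surroundings, every involutive solution is commutative and non-degenerate, so a finite irretractable involutive solution $(S,s)$ is in particular a finite irretractable commutative non-degenerate solution. \Cref{classif} then yields $(S,s)\cong(\mathcal{G}(S,s),s_{\mathcal{G}(S,s)})$. Writing $\mathcal{G}:=\mathcal{G}(S,s)$, I would observe that an isomorphism of solutions preserves the order of the defining map, so $(\mathcal{G},s_{\mathcal{G}})$ is again involutive, i.e. $s_{\mathcal{G}}^2=\id$. By the same computation as above this forces $g^{-2}h=h$ for all $g,h\in\mathcal{G}$, that is $g^2=e$ for every $g\in\mathcal{G}$. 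A group of exponent $2$ is automatically abelian (from $(ab)^2=e$ and $a^{-1}=a$, $b^{-1}=b$ one deduces $ab=ba$), hence $\mathcal{G}$ is an elementary abelian $2$-group, and $(S,s)$ is thus constructed by \cref{esfond} from such a group.

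For the final equivalence the forward direction is immediate, since isomorphic solutions have underlying sets of equal cardinality. For the backward direction I would use \cref{carattirr}: irretractability of $(S,s)$ and $(S',s')$ forces the associated permutation groups to act regularly, whence $|\mathcal{G}(S,s)|=|S|$ and $|\mathcal{G}(S',s')|=|S'|$. If $|S|=|S'|$ then these two groups are elementary abelian $2$-groups of the same order, hence isomorphic, and the isomorphism part of \cref{esfond} (equivalently \cref{coroclass}) gives $(\mathcal{G}(S,s),s_{\mathcal{G}(S,s)})\cong(\mathcal{G}(S',s'),s_{\mathcal{G}(S',s')})$; combining with \cref{classif} yields $(S,s)\cong(S',s')$.

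I expect no serious obstacle here, as each step invokes a result already established in the excerpt; the only points requiring a word of care are that isomorphism of solutions preserves the order of $s$ (so that involutivity transfers across the isomorphism of \cref{classif}) and that exponent $2$ forces commutativity, both of which are standard.
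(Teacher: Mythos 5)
Your proposal is correct and follows essentially the same route as the paper: reduce to \cref{classif}, identify the associated permutation group as an elementary abelian $2$-group via the exponent-$2$ argument, and settle the isomorphism statement by the classification of elementary abelian $2$-groups together with \cref{esfond} and \cref{coroclass}. The only (harmless) difference is one of ordering — you apply \cref{classif} first and transport involutivity across the isomorphism to get exponent $2$, whereas the paper first asserts that every element of $\mathcal{G}(S,s)$ has order $2$ and then invokes \cref{classif}; your version actually makes explicit a detail the paper leaves implicit.
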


\begin{proof}
    If $(G,s_G)$ is such that $G$ is an elementary abelian $2-$group, one can easily show that $s_G$ is an involutive solution.\\
    Conversely, if $(S,s)$ is a finite irretractable involutive solution, then it is commutative and non-degenerate by \cite[Corollaries 3.3 and 3.4]{colazzo2020set} and all the elements of the group $\mathcal{G}(S,s)$ have order $2$, hence $\mathcal{G}(S,s)$ is an elementary abelian $2-$group and by \cref{classif} $(S,s)$ is isomorphic to $(\mathcal{G}(S,s),s_{\mathcal{G}(S,s)})$. Finally, since two finite elementary abelian $2$-group are isomorphic if and only if they have the same cardinality, the last part of the statement follows by \cref{coroclass}.
\end{proof}

\section{Non-degenerate solutions on left-zero semigroup}

Following \cite[Problem 7]{mazzotta2023survey}, in this section we focus on solutions on left-zero semigroup. At first, we show the relation between retraction and subsolutions. In the core of the section, an extension-tool useful to construct \emph{all} the commutative non-degenerate solutions on left-zero semigroup is provided, giving a partial extension of the result obtained in \cite[Proposition 5.3]{colazzo2020set}. Finally, an explicit classification is given for the solutions having cyclic associated permutation group. 
%Finally, solutions having small size are considered.

\medskip

Recall that every solution on a left-zero semigroup is always commutative, therefore the adjective "commutative" could be dropped. At first, we show that for solution on left-zero semigroups the retraction always is isomorphic to a subsolution given by an arbitrary orbit.

\begin{prop}
    Let $(S,s)$ be a finite commutative non-degenerate solution on a left-zero semigroup $(S,\cdotp)$ and let $Z$ be an orbit of $S$ respect to the action of $\mathcal{G}(S,s)$. Then, the solution $\Ret(S,s)$ is isomorphic to $(Z,s_{|_{Z\times Z}})$.
\end{prop}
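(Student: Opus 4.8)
The plan is to exhibit an explicit isomorphism between $(Z,s_{|_{Z\times Z}})$ and $\Ret(S,s)=(S/\!\sim,\bar{s})$, where $\sim$ is the retract relation. First I would observe that $Z$ is genuinely a subsolution of $(S,s)$: on a left-zero semigroup any union of orbits of $\mathcal{G}(S,s)$ is a subsolution (as noted in the Example of Section~1), so $(Z,s_{|_{Z\times Z}})$ is a solution, and on it $s(x,y)=(x,\theta_x(y))$ with $\theta_x(y)\in Z$ since $Z$ is $\mathcal{G}(S,s)$-invariant. Thus both objects to be compared are legitimate solutions.

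Next I would define the candidate map $\phi\colon Z\longrightarrow S/\!\sim$ by $\phi(x):=\bar{x}$, the retract class of $x$. The core of the argument is that $\phi$ is a bijection, and here \cref{lemgr} does all the work: for the fixed orbit $Z$, the assignment $x\mapsto\theta_x$ is a bijection from $Z$ onto $\mathcal{G}(S,s)$, by the existence and uniqueness of a representative in $Z$ realising each prescribed $\theta$. Since by definition $\bar{x}=\bar{y}$ if and only if $\theta_x=\theta_y$, injectivity of $\phi$ is exactly the uniqueness part of \cref{lemgr}. Surjectivity then follows either from cardinality, as $|Z|=|\mathcal{G}(S,s)|$ by semi-regularity and transitivity on $Z$ (\cref{teo1}) while $|S/\!\sim|=|\mathcal{G}(S,s)|$ by \cref{cardretr}, or directly: an arbitrary class $\bar{w}$ equals $\bar{x}$ for the unique $x\in Z$ with $\theta_x=\theta_w$.

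Finally I would verify the intertwining condition $(\phi\times\phi)\,s_{|_{Z\times Z}}=\bar{s}\,(\phi\times\phi)$. This is a short formal computation on a pair $(x,y)\in Z\times Z$: the left-hand side gives $(\phi\times\phi)(x,\theta_x(y))=(\bar{x},\overline{\theta_x(y)})$, while the right-hand side gives $\bar{s}(\bar{x},\bar{y})=(\bar{x}\cdotp\bar{y},\theta_{\bar{x}}(\bar{y}))=(\bar{x},\overline{\theta_x(y)})$, using that $S/\!\sim$ is a left-zero semigroup (so the first coordinate collapses) and the defining formula $\theta_{\bar{x}}(\bar{y})=\overline{\theta_x(y)}$. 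The two sides coincide, so $\phi$ is an isomorphism of solutions.

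I expect the only real subtlety to be the bijectivity of $\phi$, which is entirely governed by \cref{lemgr}; once the left-zero structure collapses the first coordinate, the intertwining identity reduces to the very definition of $\theta_{\bar{x}}$ and is a routine check.
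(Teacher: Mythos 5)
Your proposal is correct and takes essentially the same route as the paper: the same map $\phi(z):=\bar{z}$ from $Z$ to $\Ret(S,s)$, bijectivity driven by \cref{lemgr}, and the same one-line intertwining computation using the left-zero collapse of the first coordinate and the definition of $\theta_{\bar{x}}$. The only (negligible) difference is that you obtain injectivity directly from the uniqueness half of \cref{lemgr}, whereas the paper deduces bijectivity from surjectivity plus the cardinality count $|Z|=|\Ret(S,s)|$ of \cref{cardretr}.
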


\begin{proof}
    Let $\phi$ be the map from $Z$ to $\Ret(S,s) $ given by $\phi(z):=\bar{z}$ for all $z\in Z$. By \cref{lemgr} the map $\phi$ is surjective and by \cref{cardretr}, $Z$ and  $\Ret(S,s) $ have the same size, then $\phi$ is bijective. Now, if $x,y\in Z$ we have
    \begin{eqnarray}
        (\phi\times \phi) s_{|_{Z\times Z}}(x,y) &=& (\phi\times \phi) (x,\theta_x(y))\nonumber \\
        &=& (\bar{x},\overline{\theta_{x}(y)})\nonumber \\
        &=& (\bar{x},\overline{\theta}_{\bar{x}}(\bar{y}))\nonumber \\
        &=& \bar{s}(\phi\times \phi)(x,y)\nonumber 
    \end{eqnarray}
    therefore the thesis follows.
\end{proof}

Inspired by dynamical extensions of set-theoretic solutions of the Yang-Baxter equation (see \cite{vendramin2016extensions}), in the following theorem we develop a method to construct all the finite commutative non-degenerate solutions on left-zero semigroups.

\begin{theor}\label{costruzlz}
    Let $G$ be a group, $X$ a set and $\pi:G\times G\longrightarrow Sym(X)$, $\pi(a,b):=\pi_{a,b}$ a map such that $\pi_{a^{-1} b,a^{-1} c}\pi_{a,c}=\pi_{b,c} $ for all $a,b,c\in G$. Then, the pair $(X\times G,s)$ given by $s((x,a),(y,b)):=((x,a),(\pi_{a,b}(y),a^{-1} b))$ is a bijective commutative non-degenerate solution on the left-zero semigroup having $X\times G$ as underlying set. Moreover, $Ret(X\times G,s)\cong (G,s_G)$.\\
    Conversely, every finite commutative non-degenerate solution on a left-zero semigroup can be constructed in this way.
\end{theor}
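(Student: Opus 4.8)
The plan is to prove the theorem in two directions. For the forward (construction) direction, I would verify that the proposed $s$ satisfies \cref{eq1,eq2,eq3}. Since the underlying semigroup $X \times G$ is declared to be left-zero, \cref{eq1} is immediate, and the map takes the form $s((x,a),(y,b)) = ((x,a), \theta_{(x,a)}(y,b))$ with $\theta_{(x,a)}(y,b) = (\pi_{a,b}(y), a^{-1}b)$. The heart of this direction is checking \cref{eq3}, namely $\theta_{\theta_u(v)}\theta_u = \theta_v$; unwinding the definitions, the $G$-component reduces to the identity $(a^{-1}b)^{-1}(a^{-1}c) = b^{-1}c$, while the $X$-component reduces precisely to the hypothesis $\pi_{a^{-1}b,\,a^{-1}c}\,\pi_{a,c} = \pi_{b,c}$. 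I would then note commutativity and that each $\theta_{(x,a)}$ is bijective (its inverse uses $\pi_{a,b}^{-1}$ and left multiplication in $G$), so $(X\times G, s)$ is a commutative non-degenerate solution by the criteria recorded just before \cref{eq1}.

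Next I would compute $\operatorname{Ret}(X\times G, s)$. The retraction identifies $(x,a)$ and $(y,b)$ exactly when $\theta_{(x,a)} = \theta_{(y,b)}$, and since the $G$-component of $\theta_{(x,a)}$ is left multiplication by $a^{-1}$, two elements are identified precisely when their $G$-components agree (the $X$-component of the label plays no role). Thus the retract classes are indexed by $G$, and on the quotient the induced maps recover $\theta_{\bar a}(\bar b) = \overline{a^{-1}b}$, which is exactly the multiplication map defining $(G, s_G)$ from \cref{esfond}. This gives $\operatorname{Ret}(X\times G, s) \cong (G, s_G)$.

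For the converse, let $(S,s)$ be a finite commutative non-degenerate solution on a left-zero semigroup. By \cref{teo1} the group $\mathcal{G}(S,s)$ acts semi-regularly, so by \cref{lemgr} each orbit meets every retract class exactly once and the orbits all have the same size. I would set $G := \mathcal{G}(S,s)$, fix a transversal, and use \cref{lemgr} to label the elements of $S$ as pairs $(x,a) \in X \times G$, where $X$ indexes the orbits and $a \in G$ records which element of the orbit is chosen via $\theta_{(x,a)} = a^{-1}$ (using that $\operatorname{Ret}(S,s) \cong (G, s_G)$ by \cref{retrirr} and \cref{classif}). Under this identification, \cref{lemimp11} and \cref{eq3} force $\theta_{(x,a)}(y,b)$ to have $G$-component $a^{-1}b$ and $X$-component of the form $\pi_{a,b}(y)$ for suitable permutations $\pi_{a,b} \in \Sym(X)$, and the solution condition \cref{eq3} translates back into the cocycle-type relation $\pi_{a^{-1}b,\,a^{-1}c}\,\pi_{a,c} = \pi_{b,c}$.

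The main obstacle I expect is the bookkeeping in the converse: one must organize the transversal and the orbit-labelling coherently so that the abstract maps $\theta_{(x,a)}$ genuinely decompose as a permutation $\pi_{a,b}$ of the orbit-index set $X$ that is \emph{independent of the chosen representatives}, and then extract the precise relation on the $\pi_{a,b}$ from \cref{eq3}. The key technical input making this work is \cref{lemimp11}, which pins down $\theta_{g(z)} = \theta_z g^{-1}$ and thereby controls how $\theta$ varies across an orbit; combined with the semi-regularity from \cref{teo1} and the uniqueness in \cref{lemgr}, it guarantees the identification is well-defined and that the recovered $\pi$ satisfies exactly the stated compatibility.
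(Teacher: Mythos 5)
Your forward direction (reducing \cref{eq3} exactly to the hypothesis on $\pi$, checking bijectivity, and computing $\Ret(X\times G,s)$ from the observation that $\theta_{(x,a)}=\theta_{(x',a')}$ if and only if $a=a'$) is essentially identical to the paper's. The converse is where you take a genuinely different route. The paper sets $G:=\mathcal{G}(\Ret(S,s))$ and identifies $S$ with $X\times G$ so that the \emph{second} coordinate records the retract class and the first coordinate is an arbitrary labelling inside each class; the $X$-component of $\theta_{(x,a)}(y,b)$ is then some permutation $q_{a,b,x}(y)$, one notes $q_{a,b,x}$ is independent of $x$ because $\theta_{(x,a)}=\theta_{(x',a)}$, and \cref{eq3} yields the cocycle identity for $\pi_{a,b}:=q_{a,b,x}$. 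You instead set $G:=\mathcal{G}(S,s)$ and label each element by (orbit, $\theta$-value), which \cref{lemgr} makes a well-defined bijection $S\to X\times G$. What you flag as the ``main obstacle'' --- organizing the labelling so that the $X$-component decomposes as $\pi_{a,b}(y)$ independently of representatives --- is settled by a one-line remark you stop just short of making: $\theta_{(x,a)}$ is an element of $\mathcal{G}(S,s)$, and every element of a permutation group maps each orbit onto itself, so in your coordinates the $X$-component of $\theta_{(x,a)}(y,b)$ is exactly $y$. In other words, your identification forces $\pi_{a,b}=\id_X$ for all $a,b$ (which trivially satisfies the required relation), while the $G$-component is $a^{-1}b$ by \cref{lemimp11} and \cref{eq3}, as you say; with that remark your converse is complete. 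Notice that it then proves something sharper than the paper's statement: every finite commutative non-degenerate solution on a left-zero semigroup is isomorphic to one with \emph{trivial} cocycle, $s((x,a),(y,b))=((x,a),(y,a^{-1}b))$. The paper's coordinates cannot see this, because the labels inside a retract class are chosen arbitrarily, whereas yours are canonically pinned to orbits. One harmless citation slip: \cref{classif} applied to $\Ret(S,s)$ gives $\Ret(S,s)\cong(\mathcal{G}(\Ret(S,s)),s_{\mathcal{G}(\Ret(S,s))})$, not directly an isomorphism with $(\mathcal{G}(S,s),s_{\mathcal{G}(S,s)})$; but you never actually need this, since \cref{lemgr} alone justifies your labelling and \cref{lemimp11} does the rest.
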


\begin{proof}
    Let $G,X$, $\pi$ and $s$ as in the statement and let $(x,a)$ $(y,b)$ be arbitrary elements of $X\times G$. By an easy calculation one can show that the map $\theta_{(x,a)}$ given by $\theta_{(x,a)}(z,c):=(\pi_{a,c}(z),a^{-1} c)$ is bijective. Moreover, since $\pi_{a,b}\in Sym(X)$ for all $a,b\in G$, the bijectivity of $s$ also follows. Therefore to show the first part of the statement we have to prove that the equality  $\theta_{\theta_{(x,a)}(y,b)}\theta_{(x,a)}=\theta_{(y,b)}. $ Then, we have 
    \begin{eqnarray}
    \theta_{\theta_{(x,a)}(y,b)}\theta_{(x,a)}(z,c) &=& \theta_{(\pi_{a,b}(y),a^{-1} b)}(\pi_{a,c}(z),a^{-1} c) \nonumber \\
    &=& (\pi_{a^{-1} b,a^{-1} c}(\pi_{a,c}(z)),(a^{-1} b)^{-1} a^{-1} c) \nonumber \\
    &=& (\pi_{b,c}(z),b^{-1} c) \nonumber \\
    &=& \theta_{(y,b)}(z,c) \nonumber 
    \end{eqnarray}
    for all $(z,c)\in X\times G$, hence the pair $(X\times G,s)$ is a bijective commutative non-degenerate solution on the left-zero semigroup having $X\times G$ as underlying set. Now, since $\theta_{(x,a)}=\theta_{(x',a')}$ if and only if $a=a'$, it is well-defined the map $\psi$ from $Ret(X\times G,s) $ to $(G,s_G)$ by $\psi(\overline{(x,a)}):=a$ for all $(x,a)\in X\times G$. By a standard calculation, we obtain that $\psi$ is an isomorphism.\\
    Conversely, suppose that $(S,s)$ is a finite commutative non-degenerate solution on a left-zero semigroup and set $G:=\mathcal{G}(Ret(S,s))$. Then, by \cref{cardretr}, \cref{retrirr} and \cref{classif} $S$ can be identified with the set $X\times G$ for a suitable set $X$, where $\theta_{(x,a)}=\theta_{(y,b)}$ if and only if $a=b$. Under this identification, there exist a function $q:G\times G\times X\longrightarrow Sym(X)$, $q(a,b,x):=q_{a,b,x}$ such that $s$ can be written as 
    $$s((x,a),(y,b))=((x,a),(q_{a,b,x}(y),a^{-1} b) $$
    for all $(x,a),(y,b)\in X\times G$. Since $\theta_{(x,a)}=\theta_{(y,b)}$ if and only if $a=b$, we obtain $q_{a,b,x}=q_{a,b,y}$ for all $a,b\in G$ and $x,y\in X$, therefore, if $x$ is an arbitrary element of $X$, we can set $\pi(a,b):=q_{a,b,x}$ for all $a,b\in G$. Finally, since $(X\times G,s)$ is a solution, we have that $\pi_{a^{-1} b,a^{-1} c}\pi_{a,c}=\pi_{b,c} $ for all $a,b,c\in G$ and hence the second part of the statement follows.
 \end{proof}

\begin{rem}
     Actually, we do not know if the "converse part" of the previous theorem holds for infinite solutions.
\end{rem} 

\noindent By \cref{costruzlz}, we are able to obtain a family of solutions that includes the one constructed in \cite[Proposition 5.1]{colazzo2020set}

\begin{prop}
     Let $G$ be a group, $X$ a set and $\sigma:G\longrightarrow Sym(X)$, $\sigma(a):=\sigma_a$ a map from $G$ to $Sym(X)$. Then, the pair $(X\times G,s)$ given by 
     $$s((x,a),(y,b)):=((x,a),(\sigma_{a^{-1}\circ b}\sigma_b^{-1}(y),a^{-1}\circ b))$$
     is a commutative non-degenerate solution on the left-zero semigroup having $X\times G$ as underlying set. This solution is involutive if and only if $G$ is an elementary abelian $2-$group. Moreover, $Ret(X\times G,s)\cong (G,s_G)$.    
\end{prop}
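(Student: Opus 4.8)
The plan is to realise this proposition as a direct specialisation of \cref{costruzlz}. Concretely, writing the group operation of $G$ multiplicatively (so that $a^{-1}\circ b = a^{-1}b$), I would set $\pi_{a,b} := \sigma_{a^{-1}b}\sigma_b^{-1}$ for all $a,b\in G$, so that the map $s$ of the statement takes exactly the shape $s((x,a),(y,b)) = ((x,a),(\pi_{a,b}(y),a^{-1}b))$ treated in \cref{costruzlz}. Once the single hypothesis on $\pi$ is checked, every assertion about $(X\times G,s)$ being a bijective (hence in particular commutative non-degenerate) solution on the left-zero semigroup $X\times G$, as well as the isomorphism $\Ret(X\times G,s)\cong (G,s_G)$, is inherited verbatim from \cref{costruzlz}.

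The first step is therefore to verify the compatibility condition $\pi_{a^{-1}b,\,a^{-1}c}\,\pi_{a,c} = \pi_{b,c}$ for all $a,b,c\in G$. This is a short direct computation: expanding the left-hand side gives $\sigma_{(a^{-1}b)^{-1}(a^{-1}c)}\,\sigma_{a^{-1}c}^{-1}\,\sigma_{a^{-1}c}\,\sigma_c^{-1}$, the inner pair $\sigma_{a^{-1}c}^{-1}\sigma_{a^{-1}c}$ cancels to $\id_X$, and $(a^{-1}b)^{-1}(a^{-1}c)=b^{-1}c$, so the expression collapses to $\sigma_{b^{-1}c}\sigma_c^{-1}=\pi_{b,c}$. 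Thus the hypothesis of \cref{costruzlz} holds and the first and last assertions of the proposition follow immediately.

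It then remains to characterise when the solution is involutive, which I would do by computing $s^2$ explicitly. Applying $s$ twice and telescoping the factor $\sigma_{a^{-1}b}^{-1}\sigma_{a^{-1}b}=\id_X$ that appears, one obtains
$$s^2((x,a),(y,b)) = ((x,a),(\sigma_{a^{-2}b}\sigma_b^{-1}(y),\,a^{-2}b)).$$
Hence $s^2=\id_{(X\times G)^2}$ holds if and only if $a^{-2}b=b$ for all $a,b\in G$; note that once the second components coincide the first components agree automatically, since $a^{-2}b=b$ forces $\sigma_{a^{-2}b}\sigma_b^{-1}=\id_X$. Specialising to $b$ equal to the identity of $G$, this condition is equivalent to $a^2=e$ for every $a\in G$.

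Finally, I would invoke the standard fact that a group of exponent $2$ is necessarily abelian, so that ``$a^2=e$ for all $a\in G$'' is exactly the assertion that $G$ is an elementary abelian $2$-group; this closes the equivalence. There is no genuine obstacle here: the only place demanding mild care is the bookkeeping in the $s^2$ computation, namely checking that the $\sigma$-factors telescope correctly so that the second component becomes $a^{-2}b$; everything else is an immediate consequence of \cref{costruzlz}.
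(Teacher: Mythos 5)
Your proposal is correct, and its core is the same as the paper's: both proofs set $\pi_{a,b}:=\sigma_{a^{-1}b}\sigma_b^{-1}$, verify the cocycle-type condition $\pi_{a^{-1}b,a^{-1}c}\pi_{a,c}=\pi_{b,c}$ by the same telescoping cancellation, and then invoke \cref{costruzlz} to get both the solution property and $\Ret(X\times G,s)\cong (G,s_G)$. Where you genuinely diverge is the involutivity characterisation. The paper argues structurally: if $(X\times G,s)$ is involutive then so is its retraction, hence $(G,s_G)$ is involutive and $G$ must be an elementary abelian $2$-group; for the converse it simply cites Proposition 5.1 of \cite{colazzo2020set}, which asserts involutivity of this very construction when $G$ is an elementary abelian $2$-group. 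You instead compute $s^2$ directly, obtaining $s^2((x,a),(y,b))=((x,a),(\sigma_{a^{-2}b}\sigma_b^{-1}(y),a^{-2}b))$ (the computation checks out, including the observation that $a^{-2}b=b$ forces the $\sigma$-factor to be the identity), and reduce involutivity to $a^2=e$ for all $a\in G$, finishing with the standard fact that exponent-$2$ groups are abelian. Your route is self-contained and handles both directions at once, at the cost of a slightly longer calculation; the paper's route is shorter but leans on the external result of Colazzo--Jespers--Kubat and on the (easily verified, but unstated) fact that involutivity passes to retractions.
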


\begin{proof}
    Let $\pi$ the map from $G\times G $ to $Sym(X)$ given by $\pi_{a,b}:=\sigma_{a^{-1} b}\sigma_b^{-1}$ for all $a,b\in G$. Then
    \begin{eqnarray}
        \pi_{a^{-1} b,a^{-1} c}\pi_{a,c}(z) &=& \pi_{a^{-1} b,a^{-1} c}\sigma_{a^{-1} c}\sigma_c^{-1}(z) \nonumber \\
        &=& \sigma_{b^{-1} a a^{-1} c}\sigma_{a^{-1} c}^{-1}\sigma_{a^{-1} c}\sigma_c^{-1}(z) \nonumber \\
        &=& \sigma_{b^{-1} c}\sigma_{c}^{-1}(z) \nonumber \\
        &=& \pi_{b,c}(z)
    \end{eqnarray}
    for all $a,b,c\in G$, therefore the first part of the statement follows by \cref{costruzlz}. By the same theorem, we have that $Ret(X\times G,s)$ is isomorphic to $ (G,s_G)$.\\
    Finally, if the solution is involutive, then so is $(G,s_G)$, hence $G$ must be an elementary abelian $2-$group. Conversely, if $G$ is an elementary abelian $2-$group, by \cite[Proposition 5.1]{colazzo2020set} we have that $(X\times G,s)$ is involutive, hence the statement follows.
\end{proof}

Even if we restict to non-degenerate solutions on left-zero semigroups, providing a quite explicit classification seems to be hard. However, in the following theorem we show that this is possible if we consider solution on left-zero semigroups with cyclic permutation groups. 

\begin{theor}\label{classcic}
    Every commutative non-degenerate solution on an arbitrary left-zero semigroup of size $r$ with cyclic associated permutation group is completely determined by two positive integers $m$ and $n$ such that $mn=r$, where $m$ is the number of the orbits respect to the action of the permutation group and $n$ is the size of every orbit.
\end{theor}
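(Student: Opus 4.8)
The plan is to reduce the solution to a transparent combinatorial model built from the semi-regular action of $\mathcal{G}(S,s)$, and then to show that once the isomorphism type of $\mathcal{G}(S,s)$ and the number of orbits are fixed the whole solution is rigid. Write $\mathcal{G}:=\mathcal{G}(S,s)$. First I would record the orbit combinatorics: by \cref{teo1} the action of $\mathcal{G}$ on $S$ is semi-regular, so every orbit has size exactly $|\mathcal{G}|$; hence all $m$ orbits share the common size $n=|\mathcal{G}|$ and $mn=|S|=r$. Since $\mathcal{G}$ is cyclic of order $n$, its isomorphism type is pinned down by $n$ alone, so $\mathcal{G}\cong\mathbb{Z}/n\mathbb{Z}$. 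Next I would make $\theta$ completely explicit: by \cref{lemimp1} each orbit contains an element with $\theta=\mathrm{id}_S$, and by \cref{lemgr} (with $g=\mathrm{id}_S$) this element is unique in its orbit; call it $s_i$ for the $i$-th orbit. Then \cref{lemimp11} yields the intrinsic formula $\theta_{g(s_i)}=\theta_{s_i}g^{-1}=g^{-1}$ for all $g\in\mathcal{G}$. As $(S,\cdotp)$ is left-zero, $s(x,y)=(x,\theta_x(y))$ is governed solely by $\theta$, so the solution is entirely encoded by the $\mathcal{G}$-set $S$ together with its distinguished base points $s_i$.

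The heart of the argument is a rigidity step: any two such solutions $(S,s)$ and $(S',s')$ with the same pair $(m,n)$ are isomorphic. I would fix a group isomorphism $\Psi:\mathcal{G}(S,s)\to\mathcal{G}(S',s')$ (available since both groups are cyclic of order $n$), let $s_i$ and $s_i'$ be the forced base points of the $i$-th orbits, and define $\phi:S\to S'$ by $\phi(g(s_i)):=\Psi(g)(s_i')$. Semi-regularity guarantees that every element of $S$ is $g(s_i)$ for a unique pair $(g,i)$, so $\phi$ is well defined and bijective, and a direct check shows it is $\Psi$-equivariant. The intrinsic formula then gives $\theta'_{\phi(x)}=\Psi(\theta_x)$, and combined with equivariance this produces $\phi(\theta_x(y))=\theta'_{\phi(x)}(\phi(y))$, which is exactly the intertwining $(\phi\times\phi)s=s'(\phi\times\phi)$ (the left-zero product is respected by any bijection). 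Hence $\phi$ is an isomorphism of solutions and the isomorphism class depends only on $(m,n)$. Conversely, every factorization $r=mn$ is realized: feeding $m$ copies of the regular $\mathbb{Z}/n\mathbb{Z}$-action into \cref{coroperir} (equivalently \cref{costruzlz} with $G\cong\mathbb{Z}/n\mathbb{Z}$ and a set of size $m$) yields a solution with precisely these invariants, so solutions correspond bijectively to such pairs.

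The main obstacle is the rigidity step: one must check that the purely combinatorially defined $\phi$ is not merely an isomorphism of $\mathcal{G}$-sets but actually respects the maps $\theta$, hence the solution. This is exactly where the intrinsic description $\theta_{g(s_i)}=g^{-1}$ (coming from \cref{lemimp1}, \cref{lemgr} and \cref{lemimp11}) does the work. Cyclicity itself is used only to guarantee that a group isomorphism $\Psi$ exists once the order $n$ is fixed; for a general $\mathcal{G}$ the same scheme would classify these solutions by the isomorphism type of $\mathcal{G}$ together with $m$, rather than by the single integer $n$.
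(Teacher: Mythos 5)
Your proposal is correct and follows essentially the same route as the paper's proof: both arguments pin down the map $\theta$ via \cref{lemimp1}, \cref{lemgr} and \cref{lemimp11} (giving $\theta_{g(s_i)}=g^{-1}$ on each orbit) and then build the isomorphism by matching the distinguished base points orbit-by-orbit, the only difference being that the paper works in explicit coordinates $x_{i,j}$ with a concrete model verified by hand, while you phrase the same rigidity argument through an abstract group isomorphism $\Psi$ and delegate existence to \cref{coroperir}. Your closing remark that cyclicity is only used to pin down the isomorphism type of $\mathcal{G}$ by its order, so that the same scheme classifies these solutions by the pair (isomorphism type of $\mathcal{G}$, number of orbits), is also a correct reading of what the argument actually proves.
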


\begin{proof}
    Let $m,n$ be positive integers such that $mn$ is equal to $r$ and $S:=\{x_{i,j}\}_{i\in \{1,...,m\},j\in \{1,...,n \}}$ be a the set having size $r$. Take $\alpha$ the permutation given by $\alpha(x_{i,j}):=x_{i,j+1}$ for all $i,j$, and $\theta$ the map from $S$ to  $<\alpha>$ given by $\theta_{x_{i,j}}:=\alpha^{2-j} $ for all $i\in \{1,...,m\},j\in \{1,...,n\}$. By a standard calculation, one can show that $S$ and $\theta$ give rise to a commutative non-degenerate solution $(S,s)$ on a left-zero semigroup on $S$. Moreover, $\mathcal{G}(S,s)$ is isomorphic to the subgroup generated by $\alpha$. To prove the statement we have to show  that every solution with cyclic permutation group and $m$ orbits of size $n$ is isomorphic to $(S,s)$.
    Now, let $(S',s')$ be a commutative non-degenerate solution having size $r$ and such that the permutation group is a cyclic group and has $m$ orbits of size $n$. Then, after renaming the variables, we can suppose that $S':=\{y_{i,j}\}_{i\in \{1,...,m\},j\in \{1,...,n \}}$ and $\mathcal{G}(S',s')$ is generated by the permutation $\alpha'$ given by $\alpha'(y_{i,j}):=y_{i,j+1}$ for all $i,j$. Moreover, by \cref{lemgr}, after renaming again the variables, we can suppose that $\theta_{y_{i,1}}=\alpha'$ for all $i\in \{1,....,m\}$. By a standard calculation, we obtain that $\theta_{y_{i,j}}:=\alpha'^{2-j} $ and the map $\phi$ from $S$ to $S'$ given by $\phi(x_{i,j}):=y_{i,j}$ for all $i\in \{1,...,m\}$ and $j\in \{1,...,n\}$ is an isomorphism from $(S,s)$ to $(S',s')$, therefore the claim follows.
\end{proof}

\section{Solutions on left-zero semigroups having small size}

As a consequences of the results obtained in the previous sections, we provide some classification results on solutions having small size. 

\begin{prop}\label{propoleft}
    There exist $2$ commutative non-degenerate solutions on a left-zero semigroup of prime size $p$: 
    \begin{itemize}
        \item[1)] the solution $(\mathbb{Z}/p\mathbb{Z},s)$ given by $s(x,y)=(x,-x+y)$ for all $x,y\in \mathbb{Z}/p\mathbb{Z}$;
        \item[2)] the solution $(\mathbb{Z}/p\mathbb{Z},s)$ given by $s(x,y)=(x,y)$ for all $x,y\in \mathbb{Z}/p\mathbb{Z}$.
    \end{itemize}
    
\end{prop}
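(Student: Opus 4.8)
The plan is to leverage the classification machinery built up in the previous sections, reducing the enumeration of commutative non-degenerate solutions on a left-zero semigroup of prime size $p$ to a question about subgroups of $\Sym(\mathbb{Z}/p\mathbb{Z})$ that act semi-regularly. By \cref{coroperir} together with its converse corollary, every such solution arises from a choice of a subgroup $\mathcal{G}\leq \Sym(S)$ acting semi-regularly on a set $S$ of size $p$, and is determined up to the labelling data by the isomorphism type of $\mathcal{G}$ and its orbit structure. So the first step is: since the action of $\mathcal{G}$ is semi-regular, the order $|\mathcal{G}|$ divides $|S|=p$ (each orbit has size $|\mathcal{G}|$ by semi-regularity, and the orbits partition $S$), hence $|\mathcal{G}|\in\{1,p\}$.

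I would then treat the two cases separately. If $|\mathcal{G}|=1$, then every $\theta_x=\id_S$, the solution is $s(x,y)=(x,y)$, and this is exactly case 2). If $|\mathcal{G}|=p$, then $\mathcal{G}$ acts regularly (a single orbit), so by \cref{carattirr} the solution is irretractable, and by \cref{classif} it is isomorphic to $(\mathcal{G},s_{\mathcal{G}})$. Since $|\mathcal{G}|=p$ is prime, $\mathcal{G}\cong \mathbb{Z}/p\mathbb{Z}$, which realizes case 1) via the additive formula $s(x,y)=(x,-x+y)$; here $\theta_x(y)=-x+y$ is precisely the left-multiplication description $s_G(g,h)=(g,g^{-1}h)$ written additively. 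The key input making this clean is that \cref{coroclass} guarantees $(\mathbb{Z}/p\mathbb{Z},s)$ is the \emph{unique} irretractable solution with permutation group isomorphic to $\mathbb{Z}/p\mathbb{Z}$, so no further isomorphism-splitting is needed within this case.

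Finally I would check that the two solutions are genuinely non-isomorphic, which is immediate since their retracts have different sizes ($|\Ret|=|\mathcal{G}|$ equals $p$ in case 1) and $1$ in case 2) by \cref{cardretr}), and that both are non-degenerate on a left-zero semigroup as required. The only mild subtlety—and the step I expect to demand the most care—is verifying that when $|\mathcal{G}|=p$ the orbit structure is forced to be a single orbit of full size, ruling out, say, $p$ fixed points with a nontrivial group acting; but this is exactly what semi-regularity forbids, since a semi-regular action of a group of order $p$ cannot fix any point, so all orbits have size $p$ and there is only one. With $|\mathcal{G}|$ constrained to $\{1,p\}$ and each case pinned down by the classification theorems, the count of exactly two solutions follows.
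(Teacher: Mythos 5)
Your proposal is correct and follows essentially the same route as the paper: semi-regularity of the action of $\mathcal{G}(S,s)$ (from \cref{teo1}) forces the orbit structure to be either a single orbit of size $p$ or $p$ fixed points, and the regular case is then resolved by \cref{coroclass} while the trivial case gives the identity-type solution. Your additional touches (invoking \cref{carattirr} and \cref{classif} explicitly, and distinguishing the two solutions via \cref{cardretr}) are just more detailed versions of steps the paper leaves implicit.
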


\begin{proof}
    Since by \cref{teo1} the action of the permutation group of a commutative non-degenerate solution is semi-regular, if $(S,s)$ is a commutative non-degenerate solutions with size $p$ we must have one of the two following condition:
    \begin{itemize}
        \item[-] a unique orbit of size $p$, hence $|\mathcal{G}(S,s)|=p$ and by \cref{coroclass} $(S,s)$ is isomorphic to the solution 1);
        \item[-] $p$ orbits of size $1$, hence $(S,s)$ must be the solution 2).
    \end{itemize}     
\end{proof}

\begin{cor}
    For every prime number $p$, there exist a unique irretractable commutative non-degenerate solution $(S,s)$ of size $p$ and it is given by $S:=\mathbb{Z}/p\mathbb{Z}$ and $s(x,y):=(x,-x+y)$ for all $x,y\in S$.
\end{cor}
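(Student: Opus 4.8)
The plan is to read the statement off the enumeration already carried out in \cref{propoleft} together with the irretractability criterion of \cref{carattirr}. First I would reduce to the left-zero setting: an irretractable solution satisfies $(S,s)=\Ret(S,s)$ by definition, and the retraction of any commutative non-degenerate solution is built on a left-zero semigroup, so every irretractable commutative non-degenerate solution is automatically a solution on a left-zero semigroup. Consequently the two solutions produced in \cref{propoleft} already list all candidates of size $p$, and it only remains to decide which of them is irretractable.

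Next I would inspect the two candidates. In solution $2)$ one has $\theta_x=id_S$ for every $x$, so all of $S$ collapses into a single retract class and $|\Ret(S,s)|=1<p$; thus this solution is retractable. In solution $1)$ the maps $\theta_x(y)=-x+y$ are pairwise distinct translations of $\mathbb{Z}/p\mathbb{Z}$, so $\mathcal{G}(S,s)$ is cyclic of order $p$ and acts transitively and freely, i.e. regularly; by \cref{carattirr} this forces $(S,s)$ to be irretractable. Alternatively, since $\mathcal{G}(S,s)\cong\mathbb{Z}/p\mathbb{Z}$, one could invoke \cref{coroclass} directly to name the unique irretractable solution with this permutation group.

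Putting these together, solution $1)$ is the only irretractable member of the two-element list, and it is precisely $(\mathbb{Z}/p\mathbb{Z},s)$ with $s(x,y)=(x,-x+y)$; this yields both existence and uniqueness. I expect no real obstacle here, since the substantive work is already contained in \cref{propoleft} and \cref{carattirr}. The only point deserving a moment's care is the reduction to left-zero semigroups, which is what upgrades the conclusion from ``unique among left-zero solutions'' to ``unique among all commutative non-degenerate solutions'' of size $p$.
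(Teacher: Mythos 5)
Your proposal is correct and follows essentially the same route as the paper: reduce to the left-zero case by noting that an irretractable solution coincides with its retraction, which lives on a left-zero semigroup, and then read the answer off the list in \cref{propoleft}. The paper's proof is just a terser version of yours; your explicit check (via \cref{carattirr} or \cref{coroclass}) that solution 1) is irretractable while solution 2) is not is left implicit there.
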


\begin{proof}
    Since the underlying semigroup of an irretractable solution must be a left-zero semigroup, the thesis follows by \cref{propoleft}.
\end{proof}

\begin{prop}\label{propoleftpq}
    Let $p,q$ be distinct prime numbers with $p>q$. A complete list of the commutative non-degenerate solutions of size $pq$ on a left-zero semigroup is the following: 
    \begin{itemize}
     \item[1)] the solution $(\mathbb{Z}/pq\mathbb{Z},s)$ given by $s(x,y)=(x,y)$ for all $x,y\in \mathbb{Z}/pq\mathbb{Z}$;
    \item[2)] the solution $(\mathbb{Z}/pq\mathbb{Z},s)$ given by $s(x,y)=(x,-x+y)$ for all $x,y\in \mathbb{Z}/pq\mathbb{Z}$;
    \item[3)] if $p-1\equiv 0$ $(mod \hspace{2mm} q)$, the solution $(\mathbb{Z}/p\mathbb{Z}\rtimes \mathbb{Z}/q\mathbb{Z},s)$ given by $s(x,y)=(x,x^{-1}\circ y)$ for all $x,y\in \mathbb{Z}/p\mathbb{Z}\rtimes \mathbb{Z}/q\mathbb{Z}$;
    \item[4)] the unique solution of \cref{classcic} with cyclic permutation group and $p$ orbits of $q$ elements;
    \item[5)] the unique solution of \cref{classcic} with cyclic permutation group and $q$ orbits of $p$ elements.
  \end{itemize}    
\end{prop}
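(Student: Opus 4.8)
The plan is to enumerate the possibilities according to the order of the associated permutation group. By \cref{teo1} the action of $\mathcal{G}(S,s)$ on $S$ is semi-regular, so by orbit–stabiliser every orbit has cardinality exactly $|\mathcal{G}(S,s)|$; since $|S|=pq$ and the number of orbits is $pq/|\mathcal{G}(S,s)|$, the order $|\mathcal{G}(S,s)|$ must divide $pq$, hence $|\mathcal{G}(S,s)|\in\{1,p,q,pq\}$. I would organise the argument as four cases according to this value and, in each, identify the solution with one of the five listed.

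First I would dispose of the two extreme cases. If $|\mathcal{G}(S,s)|=1$ then $\theta_x=\id_S$ for every $x$, so $s(x,y)=(x,y)$ and we obtain solution $1)$. If $|\mathcal{G}(S,s)|=pq$ then, the orbits having size $pq=|S|$, the action is transitive, and being also semi-regular it is regular; by \cref{carattirr} the solution is therefore irretractable, so by \cref{coroclass} it is isomorphic to $(\mathcal{G}(S,s),s_{\mathcal{G}(S,s)})$ for a group of order $pq$. At this point I would invoke the standard Sylow-theoretic classification of groups of order $pq$ with $p>q$: the only possibilities are the cyclic group $\mathbb{Z}/pq\mathbb{Z}$, which (in additive notation $g^{-1}h=-g+h$) yields solution $2)$, and — precisely when $q\mid p-1$ — the non-abelian semidirect product $\mathbb{Z}/p\mathbb{Z}\rtimes\mathbb{Z}/q\mathbb{Z}$, which yields solution $3)$.

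For the two intermediate cases the associated permutation group has prime order and is therefore cyclic, so \cref{classcic} applies: the solution is completely determined by the pair $(m,n)$, where $n=|\mathcal{G}(S,s)|$ is the common orbit size and $m=pq/n$ is the number of orbits. When $|\mathcal{G}(S,s)|=q$ this gives $m=p$ orbits of size $q$, which is solution $4)$, and when $|\mathcal{G}(S,s)|=p$ it gives $m=q$ orbits of size $p$, which is solution $5)$. I would then close by noting that the five solutions are pairwise non-isomorphic: they are separated by the order of the associated permutation group, and the two solutions of order $pq$ are further separated by whether that group is abelian.

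The step I expect to require the most care is the case $|\mathcal{G}(S,s)|=pq$: here one must correctly couple the abstract uniqueness statement \cref{coroclass} with the elementary enumeration of groups of order $pq$, checking that the convention $p>q$ forces the non-abelian group to occur exactly under the congruence $q\mid p-1$ (since $p\mid q-1$ is impossible) and that it is unique up to isomorphism, so that no case is double-counted or omitted.
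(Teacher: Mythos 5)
Your proposal is correct and follows essentially the same route as the paper: semi-regularity (\cref{teo1}) forces $|\mathcal{G}(S,s)|\in\{1,p,q,pq\}$, the regular case is handled via \cref{carattirr} and \cref{coroclass}, and the remaining cases via \cref{classcic}. You are merely more explicit than the paper about two points it leaves implicit, namely the Sylow-theoretic classification of groups of order $pq$ and the pairwise non-isomorphism of the five solutions.
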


\begin{proof}
    Suppose that $(S,s)$ is a commutative non-degenerate solution of size $pq$ on a left-zero semigroup. Since by \cref{teo1} $\mathcal{G}(S,s)$ is semi-regular, we have that $|\mathcal{G}(S,s)|\in \{1,p,q,pq\}$. If $(S,s)$ is irretractable, then by \cref{carattirr} we must have $|\mathcal{G}(S,s)|=pq$ and hence by \cref{coroclass} we have the solution $2)$ or, if $p-1\equiv 0$ $(mod \hspace{2mm} q)$, we can have solution $3)$. Finally, if $(S,s) $ is retractable we have $|\mathcal{G}(S,s)|\in \{1,p,q\}$ and hence $\mathcal{G}(S,s)$ is a cyclic group. Therefore, by \cref{classcic} we find solution $1)$, solution $4)$ or solution $5)$.
\end{proof}

\begin{cor}
    Let $p,q$ be distinct prime numbers with $p>q$. Then, the irretractable commutative non-degenerate solutions of size $pq$ are the following: 
    \begin{itemize}
   \item[1)] the solution $(\mathbb{Z}/pq\mathbb{Z},s)$ given by $s(x,y)=(x,-x+y)$ for all $x,y\in \mathbb{Z}/pq\mathbb{Z}$;
    \item[2)] if $p-1\equiv 0$ $(mod \hspace{2mm} q)$, the solution $(\mathbb{Z}/p\mathbb{Z}\rtimes \mathbb{Z}/q\mathbb{Z},s)$ given by $s(x,y)=(x,x^{-1}\circ y)$ for all $x,y\in \mathbb{Z}/p\mathbb{Z}\rtimes \mathbb{Z}/q\mathbb{Z}$.
  \end{itemize}    
\end{cor}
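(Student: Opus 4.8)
The plan is to read off the irretractable solutions directly from the classification obtained in \cref{propoleftpq}, using \cref{carattirr} to decide which entries of that list are irretractable.

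First I would recall that the underlying semigroup of any irretractable commutative non-degenerate solution is necessarily a left-zero semigroup: the retraction $\Ret(S,s)$ is always built over a left-zero semigroup (as established when the retraction was introduced in Section 3), and for an irretractable solution one has $(S,s)=\Ret(S,s)$. Consequently every irretractable commutative non-degenerate solution of size $pq$ already appears among the five families listed in \cref{propoleftpq}, and it remains only to single out the irretractable members of that list.

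Next I would apply \cref{carattirr}: since by \cref{teo1} the action of $\mathcal{G}(S,s)$ is semi-regular, irretractability is equivalent to this action being regular, that is, to $|\mathcal{G}(S,s)|=pq$. Scanning the list of \cref{propoleftpq}, solution 1) has trivial associated permutation group, while solutions 4) and 5) have cyclic associated permutation groups of order $q$ and $p$ respectively; all three therefore fail to act regularly and are retractable. By contrast, solutions 2) and 3) have associated permutation groups isomorphic to $\mathbb{Z}/pq\mathbb{Z}$ and to $\mathbb{Z}/p\mathbb{Z}\rtimes\mathbb{Z}/q\mathbb{Z}$, both of order $pq$, so their actions are regular and they are irretractable. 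Relabelling these two as items 1) and 2) yields exactly the statement.

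There is no genuine obstacle here, as the substantive classification was already carried out in \cref{propoleftpq} (which itself rests on \cref{coroclass} together with the elementary fact that the only groups of order $pq$ are $\mathbb{Z}/pq\mathbb{Z}$ and, when $q\mid p-1$, the non-abelian semidirect product). The only point requiring a little care is the reduction to the left-zero case in the first step, which is what guarantees that no irretractable solution of size $pq$ escapes the scope of \cref{propoleftpq}.
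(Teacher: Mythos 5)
Your proposal is correct and follows essentially the same route as the paper: reduce to the left-zero case (since an irretractable solution equals its retraction, whose underlying semigroup is left-zero), then apply \cref{carattirr} to the list in \cref{propoleftpq} to keep exactly the two solutions whose associated permutation group has order $pq$. The paper's own proof is just a one-line version of this argument, so your extra detail (computing the orders of the permutation groups of each entry) is a faithful expansion rather than a different method.
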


\begin{proof}
Since the underlying semigroup of an irretractable solution must be a left-zero semigroup, the thesis follows by \cref{carattirr} and \cref{propoleftpq}.
\end{proof}

The proofs of the following results, that classify commutative non-degenerate solutions of size $p^2$, are omitted since they are similar to the ones of the previous results.

\begin{prop}\label{propoleftp2}
    Let $p$ be a prime number. A complete list of the commutative non-degenerate solutions of size $p^2$ on a left-zero semigroup is the following: 
    \begin{itemize}
     \item[1)] the solution $(\mathbb{Z}/p^2\mathbb{Z},s)$ given by $s(x,y)=(x,y)$ for all $x,y\in \mathbb{Z}/pq\mathbb{Z}$;
    \item[2)] the solution $(\mathbb{Z}/p^2\mathbb{Z},s)$ given by $s(x,y)=(x,-x+y)$ for all $x,y\in \mathbb{Z}/pq\mathbb{Z}$;
    \item[3)]  the solution $(\mathbb{Z}/p\mathbb{Z}\times \mathbb{Z}/p\mathbb{Z},s)$ given by $s(x,y)=(x,-x+y)$ for all $x,y\in \mathbb{Z}/p\mathbb{Z}\times \mathbb{Z}/p\mathbb{Z}$;
    \item[4)] the unique solution of \cref{classcic} with cyclic permutation group and $p$ orbits of $p$ elements.
  \end{itemize}    
\end{prop}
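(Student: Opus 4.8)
The plan is to classify commutative non-degenerate solutions on a left-zero semigroup of size $p^2$ by partitioning them according to the order of their associated permutation group, exactly as was done for the sizes $pq$ and $p$. By \cref{teo1} the action of $\mathcal{G}(S,s)$ on $S$ is semi-regular, so $|\mathcal{G}(S,s)|$ divides $|S|=p^2$; hence $|\mathcal{G}(S,s)|\in\{1,p,p^2\}$. I would organize the proof around the dichotomy irretractable versus retractable provided by \cref{carattirr}: the solution is irretractable precisely when the action is regular, i.e.\ when $|\mathcal{G}(S,s)|=p^2$.

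First I would treat the irretractable case, $|\mathcal{G}(S,s)|=p^2$. By \cref{coroclass}, such a solution is isomorphic to $(\mathcal{G},s_{\mathcal{G}})$ for a group $\mathcal{G}$ of order $p^2$, and these are determined up to isomorphism by $\mathcal{G}$. Since a group of order $p^2$ is abelian and is therefore either $\mathbb{Z}/p^2\mathbb{Z}$ or $\mathbb{Z}/p\mathbb{Z}\times\mathbb{Z}/p\mathbb{Z}$, this yields exactly solutions $2)$ and $3)$, where $s(x,y)=(x,-x+y)=(x,x^{-1}\circ y)$ in additive notation. The two groups are non-isomorphic, so by the final clause of \cref{esfond} the solutions are genuinely distinct.

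Next I would handle the retractable case, where $|\mathcal{G}(S,s)|\in\{1,p\}$. In either case $\mathcal{G}(S,s)$ is cyclic (trivial or $\mathbb{Z}/p\mathbb{Z}$), so \cref{classcic} applies: the solution is determined by the pair $(m,n)$ with $mn=p^2$, where $m$ is the number of orbits and $n$ the common orbit size. The factorizations of $p^2$ are $p^2=p^2\cdot 1=p\cdot p=1\cdot p^2$. The case $n=1$ (so $m=p^2$, trivial permutation group) gives the identity solution $1)$; the case $m=p$, $n=p$ gives solution $4)$. The remaining factorization $m=1$, $n=p^2$ has cyclic permutation group of order $p^2$ with a single orbit, which is regular, hence irretractable, and is precisely solution $2)$ already accounted for above. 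Thus the retractable solutions contribute exactly $1)$ and $4)$, and no new isomorphism types are introduced.

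The only real point requiring care — and the analogue of the main obstacle in the $pq$ case — is verifying that the five (here four) candidate solutions are pairwise non-isomorphic and that the list is exhaustive without double-counting. The non-overlap follows by comparing the invariants $(|\mathcal{G}(S,s)|,\text{orbit structure})$: solution $1)$ has trivial group, solution $4)$ has cyclic group of order $p$ with $p$ orbits, while solutions $2)$ and $3)$ are the two irretractable ones distinguished by whether $\mathcal{G}(S,s)$ is cyclic or not. Since these invariants are isomorphism-invariant by the remark following the definition of isomorphic solutions, the enumeration is complete. As the statement indicates, I would omit the routine verification and simply assemble these cases, noting the parallel with \cref{propoleftpq}.
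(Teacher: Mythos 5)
Your proposal is correct and follows essentially the same route as the paper: the paper omits this proof precisely because it is "similar to the ones of the previous results," namely the argument for \cref{propoleftpq}, which is exactly your scheme (semi-regularity via \cref{teo1} forces $|\mathcal{G}(S,s)|\in\{1,p,p^2\}$, the irretractable case is handled by \cref{carattirr} and \cref{coroclass} together with the classification of groups of order $p^2$, and the retractable case by \cref{classcic}). Your additional remarks on non-overlap of the four solutions and on the factorization $m=1$, $n=p^2$ being absorbed into the irretractable case are sound and consistent with the paper's intent.
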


\begin{cor}
    Let $p$ be a prime number. Then, the irretractable commutative non-degenerate solutions of size $p^2$ are the following: 
    \begin{itemize}
   \item[1)] the solution $(\mathbb{Z}/p^2\mathbb{Z},s)$ given by $s(x,y)=(x,-x+y)$ for all $x,y\in \mathbb{Z}/p^2\mathbb{Z}$;
    \item[2)] the solution $(\mathbb{Z}/p\mathbb{Z}\times \mathbb{Z}/p\mathbb{Z},s)$ given by $s(x,y)=(x,-x+y)$ for all $x,y\in \mathbb{Z}/p\mathbb{Z}\times \mathbb{Z}/p\mathbb{Z}$.
  \end{itemize}    
\end{cor}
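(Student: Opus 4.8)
The plan is to mirror the proof of the preceding corollary for size $pq$, reducing everything to the classification already obtained in \cref{propoleftp2} together with the regularity criterion of \cref{carattirr}. First I would observe that an irretractable solution necessarily lives on a left-zero semigroup: by the construction of $\Ret(S,s)$ in Section~3, the underlying semigroup of any retraction is a left-zero semigroup, and irretractability means $(S,s)=\Ret(S,s)$, so the underlying semigroup of $(S,s)$ is itself left-zero. This is exactly what is needed to apply \cref{propoleftp2}, which already exhausts every commutative non-degenerate solution of size $p^2$ on a left-zero semigroup.

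With that reduction in hand, the task becomes a finite check: among the four families listed in \cref{propoleftp2}, decide which are irretractable. By \cref{carattirr} a finite commutative non-degenerate solution is irretractable if and only if $\mathcal{G}(S,s)$ acts regularly on $S$; since by \cref{teo1} this action is automatically semi-regular, regularity is equivalent to transitivity, i.e. to the action having a single orbit, equivalently $|\mathcal{G}(S,s)|=p^2$. So I would simply read off the orbit structure (or, equivalently, the order of the associated permutation group) for each of the four candidates.

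Carrying out the check: solution~1) of \cref{propoleftp2} has every $\theta_x$ equal to the identity, so $\mathcal{G}(S,s)$ is trivial and the action splits into $p^2$ singleton orbits, hence is not regular; solution~4), being the cyclic solution of \cref{classcic} with $p$ orbits of $p$ elements, has $|\mathcal{G}(S,s)|=p$ together with $p$ distinct orbits, and so is again not regular. By contrast, solutions~2) and~3) have associated permutation group isomorphic to $\mathbb{Z}/p^2\mathbb{Z}$ and to $\mathbb{Z}/p\mathbb{Z}\times\mathbb{Z}/p\mathbb{Z}$ respectively, each of order $p^2$ acting with a single orbit, hence regular; by \cref{carattirr} these two are precisely the irretractable ones. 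They coincide with the two solutions displayed in the statement, so the classification follows.

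I do not expect any serious obstacle here, since all the heavy lifting is done by \cref{propoleftp2}, \cref{teo1} and \cref{carattirr}. The only point deserving a moment of care is the verification that the associated permutation groups of solutions~2) and~3) have order $p^2$ and act transitively --- equivalently that the translations $y\mapsto -x+y$ generate the full regular translation group on the respective additive group --- which guarantees that the filtering by regularity leaves exactly these two families and excludes the other two.
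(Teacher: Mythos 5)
Your proposal is correct and takes essentially the same route as the paper: the paper omits this proof, noting it is analogous to the preceding results, and the argument for the size-$pq$ corollary is exactly your reduction --- an irretractable solution must have a left-zero underlying semigroup (since it equals its retraction), after which \cref{carattirr} filters the list of \cref{propoleftp2} by regularity of the action of $\mathcal{G}(S,s)$. Your explicit orbit-counting check of the four candidates is simply the spelled-out version of that filtering.
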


%\section{Commutative and cocommutative solutions}

%\section{On reversed set-theoretic solutions of the Pentagon Equation}

%\section*{Acknowledgements}

%\section*{References}
\bibliographystyle{elsart-num-sort}
\bibliography{Bibliography2}

\end{document}